\newcommand*{\bigcdot}{}
\DeclareRobustCommand*{\bigcdot}{%
  \mathbin{\mathpalette\bigcdot@{}}%
}
\newcommand*{\bigcdot@scalefactor}{.75}
\newcommand*{\bigcdot@widthfactor}{1.15}
\newcommand*{\bigcdot@}[2]{%
  \sbox0{$#1\vcenter{}$}
  \sbox2{$#1\cdot\m@th$}%
  \hbox to \bigcdot@widthfactor\wd2{%
    \hfil
    \raise\ht0\hbox{%
      \scalebox{\bigcdot@scalefactor}{%
        \lower\ht0\hbox{$#1\bullet\m@th$}%
      }%
    }%
    \hfil
  }%
}
\newcommand{\nc}{\newcommand}
\nc{\dmo}{\DeclareMathOperator}
\nc{\nt}{\newtheorem}
\nc{\M}{\mathcal{M}}
\nc{\C}{\mathcal{C}}
\nc{\cut}{\!\ssearrow\!}
\dmo{\Mod}{Mod}
\dmo{\Teich}{Teich}
\dmo{\PMod}{PMod}
\dmo{\LMod}{LMod}
\dmo{\SMod}{SMod}
\dmo{\I}{\mathcal{I}}
\dmo{\SL}{SL}
\dmo{\PSp}{PSp}
\dmo{\PSL}{PSL}
\dmo{\Homeo}{Homeo}
\dmo{\Aut}{Aut}
\dmo{\Sq}{Sq}
\dmo{\Rot}{Rot}
\dmo{\Cub}{Cub}
\nc{\Z}{\mathbb Z}
\nc{\Q}{\mathbb Q}
\nc{\N}{\mathcal N}
\nc{\R}{\mathbb R}
\nc{\F}{\mathcal F}
\nc{\A}{\mathcal A}
\nc{\X}{\mathcal X}
\nc{\Y}{\mathcal Y}
\nc{\T}{\mathcal T}
\nc{\dig}{s}
\nc{\degr}{d}
\nc{\p}[1]{\bigskip\noindent\emph{#1.}}
\title{Twisting cubic rabbits}
\author{Justin Lanier}
\author{Rebecca R. Winarski}
\address{Justin Lanier \\ Department of Mathematics\\ University of Chicago \\ 5734 S. University Avenue \\ Chicago, IL 60637}
\email{jlanier8@uchicago.edu}
\address{Rebecca R. Winarski \\  Department of Mathematics and Computer Science\\ College of the Holy Cross\\
1 College Street \\
Worcester, MA 01610}
\email{rebecca.winarski@gmail.com}
\begin{document}
\maketitle

\begin{abstract}
We solve an infinite family of twisted polynomial problems that are cubic generalizations of Hubbard's twisted rabbit problem. We show how the result of twisting by a power of a certain Dehn twist depends on the 9-adic expansion of the power. For the cubic rabbit with three post-critical points, we also give an algorithmic solution to the twisting problem for the full pure mapping class group. 
\end{abstract}

\section{Introduction}

In this paper we give closed-form and algorithmic solutions to cubic generalizations of the twisted rabbit problem. The twisted rabbit problem was formulated by Hubbard and asks: to which polynomial is $D_x^m \circ r$ Thurston equivalent? Here $r$ is the post-critically finite quadratic polynomial known as the Douady rabbit with post-critical set $P_r$ consisting of three points, $D_x$ is the left-handed Dehn twist on the simple closed curve encircling the ``ears" of the rabbit, and $m$ varies over the integers. Bartholdi--Nekrashevych gave a closed-form solution to this problem, and they also gave an algorithm for determining the Thurston equivalence class of $g \circ r $ for arbitrary pure mapping classes $g \in \PMod(\R^2,P_r)$ \cite{BaNe}. Their method of analyzing self-similar groups associated to post-critically finite polynomials can be used to solve other twisted polynomial problems; in the same paper they carry this out for other quadratic polynomials where the post-critical set consists of three points.

In work with Belk and Margalit, we gave an alternative solution to the twisted rabbit problem and some additional twisted polynomial problems. To do so, we followed the strategy of  Bartholdi--Nekrashevych but applied methods of combinatorial topology instead of their more group theoretic approach; see \cite{BLMW} for a comparison of the methods. The present paper is a close sequel to Section 5 of that paper. There we gave a solution to the original twisted rabbit problem as well as an infinite family of related problems: for each $n$, there is an analogue of the rabbit polynomial that is a quadratic polynomial with post-critical set of size $n$. For each of these polynomials we defined an analogous Dehn twist cyclic subgroup and gave a closed-form solution to the associated family of twisting problems using a uniform argument.

Increasing the length of the critical cycle is one way to generalize the rabbit polynomial. Another is to change the degree of the polynomial. In this paper we consider twists of a unicritical cubic polynomial with post-critical set of size 3 called the cubic rabbit, as well as an infinite family of unicritical cubic polynomials---``many-eared" cubic rabbits---that have  critical cycles of arbitrary length. In each case we give a closed-form solutions to twisting problems on a cyclic subgroup of Dehn twists. The resulting maps depend on the 9-adic expansion of the power of the twist. For the cubic rabbit with 3 post-critical points, we also give two algorithmic solutions to the problem of determining the Thurston equivalence class of post-composing the cubic rabbit with arbitrary pure mapping classes. One algorithm uses the wreath recursion approach employed by Bartholdi--Nekrashevych, while the other applies an elementary word length argument.

\begin{figure}
    \centering
    \includegraphics[width=.35\textwidth]{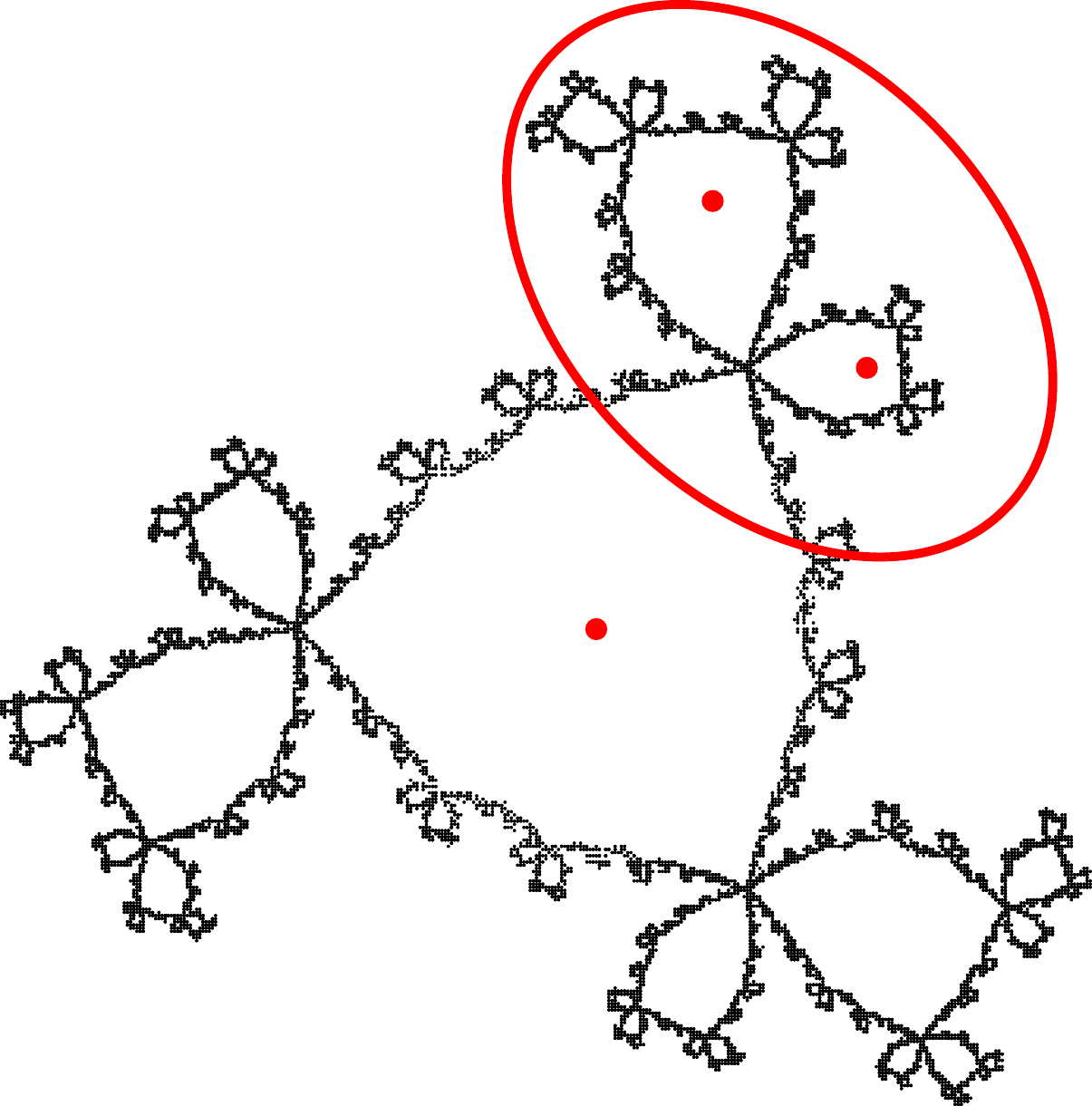}\qquad \qquad
    \includegraphics[width=.35\textwidth]{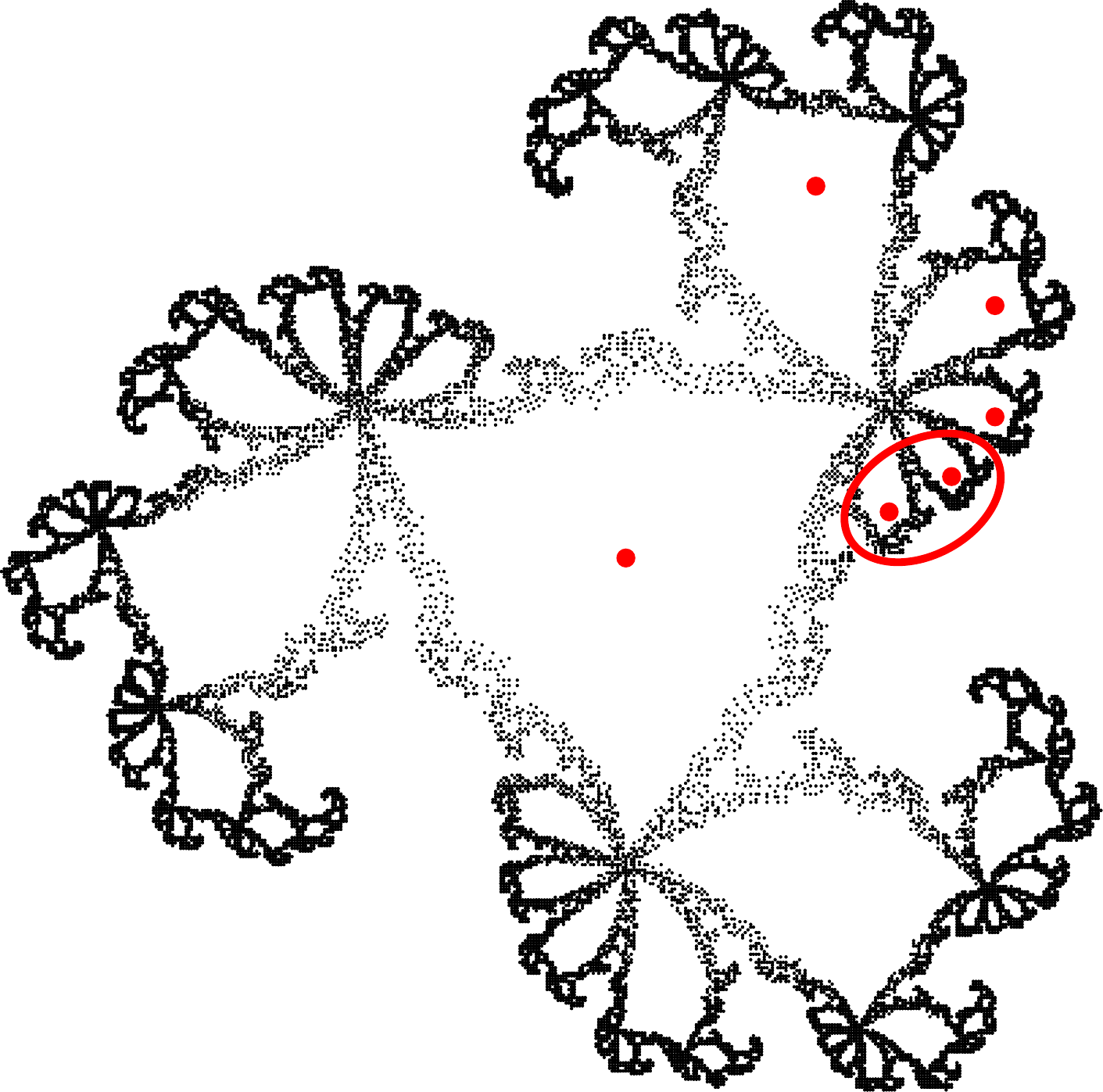}
    \caption{For $n=3$ and $6$: the Julia set for $R_n$, the post-critcal set for $R_n$, and the curve $x_n$.}
    \label{fig:R3R6}
    
\end{figure} 

In Figure~\ref{fig:R3R6} we depict the Julia sets and post-critical sets of cubic rabbits $R_3$ and $R_6$ with 3 and 6 post-critical points, respectively. These two polynomials are $R_n(z)=z^3+c_n$ with $c_3 \approx 0.558+0.540i$ and $c_6 \approx 0.510 + 0.089i$. They belong to an infinite family $R_n$ of polynomials that share combinatorial and dynamical properties. For each map, Figure~\ref{fig:R3R6} also shows a curve $x_n$ that encircles the two points $R_n(0)$ and $R_n^2(0)$.

Up to Thurston equivalence, there are four unicritical cubic polynomials with 3 post-critical points: the cubic rabbit, the cubic airplane, and their complex conjugates. We notate these as $R_3,\overline{R}_3,A_3$, and $\overline{A}_3$; see Section~\ref{sec:cubic} for their descriptions. All four of these arise as twists of the cubic rabbit $R_3$ about the curve $x_3$.

\begin{theorem}
\label{thm:cubicrabbit}
Let $x=x_3$ and let $m \in \Z$. Let $\dig$ be the right-most digit in the 9-adic expansion of $m$ that is not a 0, 4, or 8, if this exists. Then

\[
D_x^m R_3 \simeq
\begin{cases}
R_3&\text{ if the 9-adic expansion of  }m\geq0\text{ contains only 0's, 4's, and 8's}\\
\overline{R}_3&\text{ if the 9-adic expansion of  }m<0\text{ contains only 0's, 4's, and 8's}\\
A_3&\text{ if }\dig= \text{1, 5, or 6}\\
\overline{A}_3&\text{ if }\dig= \text{2, 3, or 7}\\
\end{cases}
\]

\end{theorem}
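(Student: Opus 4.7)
The strategy is to establish a lifting recursion for $D_x^m R_3$ and iterate, following the combinatorial topology method of \cite{BLMW} adapted to the cubic setting. The core observation is that the branched cover $R_3$ induces a virtual endomorphism on $\PMod(\R^2, P_{R_3})$: given a pure mapping class $g$, the composition $g R_3$ is Thurston equivalent to $g' R_3'$, where $R_3'$ is one of the four candidate unicritical cubics $R_3, \overline{R}_3, A_3, \overline{A}_3$ and $g'$ is a lifted mapping class that is simpler in an appropriate sense (shorter word length in $D_x$). Iterating this procedure reduces $|m|$ until the lifted mapping class is trivial, at which point the target cubic is revealed.

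The first step is to analyze the preimages of the curve $x = x_3$ under $R_3$ and $R_3^2$. Because $x$ encircles two of the three post-critical points, its preimage $R_3^{-1}(x)$ consists of several components that cut the sphere, and $D_x$ pulls back to a product of Dehn twists along the essential preimage components, with multiplicities determined by the local degree. However, $x$ itself does not appear (up to isotopy) among the components of $R_3^{-1}(x)$, so a single level of lifting cannot produce a recursion in the exponent $m$. Two levels are required: one checks that $x$ does appear in $R_3^{-2}(x)$, and since $R_3^2$ has degree $9$, the lifting of $D_x^m$ under $R_3^2$ produces a Dehn twist factor of $D_x^{m/9}$.

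The main technical computation is then to establish a recursion of the form
\[
D_x^{9k + r} R_3 \;\simeq\; \phi_r \cdot D_x^{k} R_3^{(r)}
\]
for each residue $r \in \{0, 1, \ldots, 8\}$, where $\phi_r$ is a fixed pure mapping class independent of $k$ and $R_3^{(r)} \in \{R_3, \overline{R}_3, A_3, \overline{A}_3\}$. Each case is verified by an explicit topological computation: identifying the essential components of $R_3^{-2}(x)$ with their twist multiplicities, applying the lifting formula to $D_x^{9k+r} R_3$, and using a change of coordinates to identify the resulting covering map with one of the four candidate cubics. The three-class partition $\{0,4,8\}, \{1,5,6\}, \{2,3,7\}$ of residues should emerge here, with digits in $\{0,4,8\}$ corresponding to lifts whose target is again $R_3$ (so the recursion continues into the next 9-adic digit), while the other six digits terminate the determination at $A_3$ or $\overline{A}_3$. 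The sign of $m$ enters through the convention on the 9-adic expansion and distinguishes the non-conjugated versus conjugated polynomials in the all-$\{0,4,8\}$ case.

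With the recursion in hand, the theorem follows by induction on the length of the 9-adic expansion of $m$, with base case $m=0$ giving $R_3$. The principal obstacle is the explicit topological calculation in Step 2: carefully drawing components of $R_3^{-2}(x)$, computing Dehn twist lifts with correct multiplicities and signs, and identifying each lifted cover with precisely one of the four candidate cubics via a change of coordinates. The three-fold symmetry of the cubic cover and the behavior of the critical value relative to the curve $x$ should explain why the residues group into classes of size three, but verifying that the particular assignment matches $\{0,4,8\}, \{1,5,6\}, \{2,3,7\}$ requires the full computation rather than a symmetry argument alone.
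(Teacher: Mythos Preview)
Your overall strategy---lift mapping classes through $R_3$, obtain a recursion governed by the residue of $m$ modulo $9$, and iterate through the $9$-adic digits---matches the paper's. But the form of the recursion you write down is wrong, and the gap hides the one genuinely nontrivial idea.

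You propose $D_x^{9k+r}R_3 \simeq \phi_r\, D_x^k\, R_3^{(r)}$ with the \emph{base map} $R_3^{(r)}$ changing among $\{R_3,\overline{R}_3,A_3,\overline{A}_3\}$ while a factor $D_x^k$ persists. That is not what happens. The base map never changes during the reduction; every step stays of the form $g\,R_3$. The dichotomy is instead:
\begin{itemize}
\item for $r\in\{0,4,8\}$ one has $D_x^{9k+r}R_3 \simeq D_x^{k}R_3$, so the recursion continues into the next $9$-adic digit;
\item for $r\in\{1,2,3,5,6,7\}$ one has $D_x^{9k+r}R_3 \simeq g_r R_3$ for a \emph{fixed} mapping class $g_r$ that does not depend on $k$ at all.
\end{itemize}
Termination for the six ``bad'' residues is not because the target polynomial has been identified---it is because the $k$-dependence has been annihilated. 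The mechanism for this is the paper's Lemma~\ref{lem:triviality2.0}: after a couple of lifts the residual power of $D_x^k$ becomes a power of a Dehn twist about a curve whose defining arc has algebraic intersection $\not\equiv 0 \pmod 3$ with a special branch cut, and such twists lift to the identity. Your proposal never names this mechanism, and without it there is no reason the six residues should ``terminate the determination'' as you assert.

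Two smaller corrections. First, $x$ does not appear in $R_3^{-2}(x)$: the preimages go $x\mapsto y\mapsto z\mapsto x$, so the recursion $D_x^{9k}\leadsto D_x^k$ takes three lifts through $R_3$, not two (the division by $9$ occurs over the first two because $D_x$ and $D_y$ are only liftable in cubes). Second, the base cases $g_r R_3$ are not identified with $A_3$ or $\overline{A}_3$ via a ``change of coordinates'' of the covering map; rather, one finds the topological Hubbard tree for $g_r R_3$ and reads off the polynomial from the tree shape together with an invariant angle assignment at the critical vertex. That angle datum is what distinguishes $A_3$ from $\overline{A}_3$ (and $R_3$ from $\overline{R}_3$), and it is a separate computation you would still need to carry out.
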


\noindent As illustrations of this result, the integers $89$ and $-77$ have $9$-adic expansions $108_9$ and $\dots88804_9$. (Note that $\dots 888_9=-1$.) Therefore $D_x^{89} R_3 \simeq A_3$ and $D_x^{-77} R_3 \simeq \overline{R}_3$.

As $n$ increases, the number of unicritical cubic polynomials with $n$ post-critical points increases exponentially. As an organizing principle, we can find families of polynomials, indexed by $n$, with shared combinatorics and dynamics. The maps $D_x^mR_n$ in the ``many-eared" twisted cubic rabbit problem are Thurston equivalent to polynomials in nine such families for all $n\geq 4$. We describe the families that appear in Theorem~\ref{thm:many} in Section~\ref{sec:cubicn}.

\begin{theorem}
\label{thm:many}
Let $n \geq 4$, let $x=x_n$, and let $m \in \Z$.  If $m \neq 0$, let $\dig$ be the right-most non-zero digit of the 9-adic expansion of $m$.  Then $D_x^m R_n$ is equivalent to the map:

\begin{center}
\begin{table}[!ht]
\begin{tabular}{||r |c| c |c| c| c| c| c| c|c ||} 
 \hline
  & $m=0$ & $\dig=1$ & $\dig=2$ & $\dig=3$ &  $\dig=4$ &  $\dig=5$ & $\dig=6$ &  $\dig=7$ & $\dig=8$\\
 \hline
\rule{0pt}{18pt}$D_x^m R_n\simeq $ & $R_n$ & $A_n$ & $\overline{A}_n$ & $K_{n,1}$ & $B_n$ & $\overline{Y}_n$ & $K_{n,2}$ & $Y_n$ & $\overline{B}_n$\\
 \hline
\end{tabular}
\label{tab:Thm2}
\end{table}
\end{center}

\end{theorem}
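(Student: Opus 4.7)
The proof plan parallels the lifting-based strategy used for Theorem~\ref{thm:cubicrabbit} and the many-eared quadratic cases in \cite{BLMW}. The main ingredients are (i) a lift formula describing how powers of $D_x = D_{x_n}$ behave under iterated Thurston pullback by $R_n$, and (ii) an identification of $D_x^d R_n$ with the correct map in the table for each digit $d \in \{0, 1, \ldots, 8\}$.

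The first step is to compute the essential preimages of $x_n$ under $R_n$. Because $R_n$ is a cubic polynomial and $x_n$ encircles the two points $R_n(0)$ and $R_n^2(0)$, a local analysis of the branching of $R_n^{-1}$ should identify a single essential preimage curve; iterating the pullback a second time should return a curve equivalent to $x_n$ after a twist correction, producing the factor $9 = 3^2$ in the exponent. The anticipated form of the resulting recursion is: if $m = 9q + d$, then $D_x^m R_n \simeq g_d \, D_x^q R_n$ for some fixed mapping class $g_d$ depending only on $d$. When $d = 0$, this recursion strips off a 9-adic digit and reduces the exponent from $m$ to $q$; when $d \neq 0$, further application of the recursion to the remaining $D_x^q$ factor contributes only higher-order twist corrections that do not change the Thurston class of $g_d \, R_n$. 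A word-length / finiteness argument analogous to the one in \cite{BLMW} then shows the iteration terminates, reducing the theorem to the nine base cases.

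The second step is to identify the base cases. For each $d \in \{0, 1, \ldots, 8\}$ I would directly compute a combinatorial invariant of $D_x^d R_n$---such as a lamination, Hubbard tree, or invariant spine---and match it against the corresponding invariant for the polynomials $R_n, A_n, \overline{A}_n, K_{n,1}, B_n, \overline{Y}_n, K_{n,2}, Y_n, \overline{B}_n$ defined in Section~\ref{sec:cubicn}. I expect this base-case verification to be the main obstacle. The hypothesis $n \geq 4$ should enter essentially here: as Theorem~\ref{thm:cubicrabbit} records, the $n = 3$ case admits accidental identifications among the digits $\{0,4,8\}$ and among $\{1,5,6\}$ and $\{2,3,7\}$, so the argument must produce a combinatorial invariant sharp enough to separate all nine digits when sufficiently many post-critical points are available, while also tracking the sign conventions that cause the collapses at $n = 3$ to disappear for $n \geq 4$.
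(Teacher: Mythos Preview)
Your two-step outline (reduction formulas, then base cases via Hubbard trees) is exactly the paper's strategy, but the form of the recursion you anticipate is not what actually holds, and the gap matters.

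You posit a relation $D_x^{9q+d}R_n \simeq g_d\,D_x^{q}R_n$ for a fixed mapping class $g_d$. This fails for $d\neq 0$: setting $q=0$ would force $g_d=D_x^d$, and then $q=1$ would require $D_x^{d+1}R_n\simeq D_x^{9+d}R_n\simeq D_x^dD_xR_n=D_x^{d+1}R_n$, fine so far, but $q=9$ would force $D_x^{d}D_x^{9}R_n\simeq D_x^dR_n$, i.e.\ $D_x^{d+9}R_n\simeq D_x^dR_n$, which is true; however iterating your recursion on nonzero digits would give $D_x^{9q+d}\sim g_d g_{d'}\cdots$, and there is no general reason these products stabilize to $g_d$ alone. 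What the paper actually proves is stronger and different in shape: for each $d\neq 0$ one has $D_x^{9k+d}R_n\simeq h_dR_n$ for a fixed $h_d$ \emph{independent of $k$}. The $k$-dependence is not ``absorbed by higher-order corrections''; it is annihilated during the lifting computation itself. The key device is Lemma~\ref{lem:triviality2.0}: after one or two lifts the residual factor becomes a power of a Dehn twist about a curve whose defining arc has algebraic intersection $\not\equiv 0\pmod 3$ with the special branch cut, so that twist lifts to the identity and $k$ disappears. Your proposal does not mention this triviality criterion, and without it the reduction step does not close.

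A second misplacement: you expect the hypothesis $n\ge 4$ to bite only in the base-case identifications. In fact it already changes the reduction formulas. For $n=3$ the lantern relation $D_xD_yD_z=\mathrm{id}$ collapses several of the $h_d$ (this is why digits $0,4,8$ merge in Theorem~\ref{thm:cubicrabbit}); for $n\ge 4$ the curves $x,y,z$ no longer satisfy a lantern relation, certain curves like $D_xD_y(z)$ now lift trivially rather than to $z$, and the nine residues land on nine genuinely distinct base cases $\{\,\mathrm{id},\,D_x,\,D_x^2,\,D_y,\,D_yD_x,\,D_y^{-1}D_x^{-1},\,D_y^2,\,D_x^{-2},\,D_x^{-1}\}$. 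The base-case step then matches each of these to the listed polynomial via its topological Hubbard tree and invariant angle assignment, as you anticipated.
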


To prove Theorems~\ref{thm:cubicrabbit} and \ref{thm:many}, we follow the strategy of Bartholdi--Nekrashevych \cite{BaNe}, which consists of two main steps: producing reduction formulas and calculating base cases. Producing the reduction formulas involves lifting mapping classes through $R_n$. To calculate the base cases, we use the approach of Belk--Lanier--Margalit--Winarski \cite{BLMW} to find the topological Hubbard trees and accompanying data for a small number of twisted polynomials, to which all other cases reduce.

The $9$-adic expansion of $m$ appears in the Theorems~\ref{thm:cubicrabbit} and \ref{thm:many} for the same reason that the \mbox{$4$-adic} expansion of $m$ appears in the solution of the original twisted rabbit problem: for a \mbox{degree-$d$} rabbit $R$ with $n$ post-critical points and a similarly situated curve $x$, twisting $R$ by the mapping class $D_x^{d^2k}$ always yields the same equivalence class of topological polynomial as twisting by $D_x^{k}$. Thus powers of $d^2$ can be divided out of $m$ without affecting the equivalence class of $D_x^{m}R$, and this is the same as dropping initial 0's in the $d^2$-adic expansion $m$. It is possible for similar reductions to happen for other residue classes $s$ mod $d^2$, whenever $D_x^{d^2k+s}R$ happens to be equivalent to $D_x^{k}R$, as shown through a lifting calculation. For these values of $s$, initial $s$'s in the $d^2$-adic expansion of $m$ can also be dropped. For both $d=2$ and $3$, additional reductions occur when $n=3$ (the digits $4$ and $8$ in Theorem~\ref{thm:cubicrabbit}) but do not occur for $n \geq 4$. Forthcoming work of Mukundan and the second author further explores this phenomenon by giving solutions to twisted degree-$d$ rabbit problems for $n=3$ and all $d\geq 2$.

\p{Outline of the paper} In Section~\ref{sec:background} we review the relevant definitions and background. This includes techniques we developed with Belk and Margalit; for full details, see \cite{BLMW}. In Section~\ref{sec:cubic} we define the cubic rabbit polynomial and prove Theorem~\ref{thm:cubicrabbit}. In Section~\ref{sec:algo} we give two algorithmic solutions to the problem of determining the Thurston equivalence class of post-composing the cubic rabbit with arbitrary pure mapping classes.  In Section 5 we define the ``many-eared" cubic rabbits and prove Theorem~\ref{thm:many}. 

\p{Acknowledgments} This work began while the authors were collaborating with Jim Belk and Dan Margalit on the paper \cite{BLMW}.  The first author was supported by the National Science
Foundation under Grant No. DGE-1650044 and Grant No. DMS-2002187. The second author was supported by the National Science Foundation under Grant No.\ DMS-2002951. This material is based upon work supported by the National Science Foundation
under Grant No. DMS-1928930 while the second author participated in a program hosted
by the Mathematical Sciences Research Institute in Berkeley, California, during Spring 2022.

\section{Background and techniques}\label{sec:background}
We refer the reader to joint work of the authors with Belk and Margalit  for additional background \cite{BLMW}, but we review here the definitions and results essential to stating and solving our twisted cubic rabbit problems.

\p{Topological polynomials} A {\it topological polynomial} $f$ is an orientation-preserving branched cover $f:\R^2\rightarrow \R^2$ with finite degree $\degr$ greater than 1 and finitely many critical points.  The {\it post-critical set} $P_f$ is the forward orbit of the critical points of $f$.  We say that a topological polynomial $f$ is {\it post-critically finite} if $P_f$ is finite.

Let $f:(\R^2,P_f)\rightarrow (\R^2,P_f)$ and $g:(\R^2,P_g)\rightarrow(\R^2,P_g)$ be topological polynomials with $|P_f|=|P_g|$.  We say that $f$ and $g$ are {\it equivalent} (or {\it Thurston equivalent}, or {\it combinatorially equivalent}) if there exist orientation-preserving homeomorphisms $h_1,h_2:(\R^2,P_f)\rightarrow  (\R^2,P_g)$ such that $h_1f=gh_2$ and $h_1$ and $h_2$ are isotopic relative to $P_f$. When topological polynomials $f$ and $g$ are equivalent, we write $f \simeq g$.

\p{Hubbard trees} Every post-cricially finite polynomial has an associated tree called its {\it Hubbard tree}.  Let $f$ be a polynomial with finite post-critical set $P_f$.  Following Douady--Hubbard \cite{DH1, DH2}, we define the Hubbard tree for $f$ to be the union of the regulated (or allowable) arcs of $P_f$ in the filled Julia set for $f$.  The Hubbard tree $H_f$ of $f$ is invariant under $f$, that is, $f(H_f)\subseteq H_f$.  Moreover, if $f$ is a polynomial with Hubbard tree $H_f$ and $g$ is a topological polynomial that is equivalent to $f$, then there is an isotopy class of tree associated to $g$, which we obtain by pulling back $H_f$ through the equivalence.  We call this tree the {\it topological Hubbard tree} $H_g$ of the topological polynomial $g$.

\p{Lifting trees}  Let $f$ be a post-critically finite topological polynomial with post-critical set $P_f$.  Let $T\subset \R^2$ be a tree containing $P_f$ such that all edges of the tree are contained in a path between points in $P_f$.  In particular, all leaves of $T$ must lie in $P_f$.  The {\it lift} of $T$ under $f$ is defined as a composition of two operations and is notated as $\lambda_f(T)$. First, take the preimage $f^{-1}(T)$, which is also a tree in $\R^2$. Then take the hull relative to $P_f$: that is, remove any edges of $f^{-1}(T)$ that are not part of paths between points in $P_f$. The composition of these two operations produces a tree $\lambda_f(T)$ in $\R^2$ containing $P_f$. The topological Hubbard tree $H_f$ for $f$ is isotopic (relative to $P_g$) to its lift through $f$ and so is an invariant tree for $f$. However, it need not be the unique tree with this property.

\p{Tree maps and angle assignments} While the topological Hubbard tree alone is not in general sufficient to determine the equivalence class of a topological polynomial $f$, we can endow it with additional information that then determines the equivalence class completely. The first piece of data needed is the the restriction of $f$ to the edges of the tree. This data for instance distinguishes the rabbit and corabbit polynomials in the quadratic case and in their higher-degree generalizations. The second piece of data is an invariant angle assignment; see Poirier \cite{poirier} or Belk--Lanier--Margalit--Winarski \cite[Section 3]{BLMW} for details. This data, for instance, distinguishes what we call the airplane and the coairplane cubic polynomials. 

\p{Lifting mapping classes} We follow the strategy of Bartholdi--Nekrashevych of replacing a mapping class with its lift under a branched cover in order to determine the equivalence class of a twisted topological polynomial.

Let $f:(\R^2,P_f)\rightarrow(\R^2,P_f)$ be a branched cover and $h:(\R^2,P_f)\rightarrow(\R^2,P_f)$ be a homeomorphism.  We say that $h$ lifts under $f$ if there exists a homeomorphism $\widetilde{h}:(\R^2,P_f)\rightarrow(\R^2,P_f)$ such that $hf=f\widetilde{h}$.  In this case we say that $h$ is {\it liftable} and that $h$ {\it lifts to} $\widetilde{h}$.  Because homotopy is preserved under lifting, the homotopy classes of liftable homeomorphisms form a (finite index) subgroup of the mapping class group called the  {\it liftable mapping class group} $\LMod(\R^2,P_f)$.

If $h$ lifts to $\widetilde{h}$ under $f$, then $hf$ and $\widetilde{h}f$ are equivalent.  More generally, for any $h \in \PMod(\R^2,P_f)$, there exists $g$ such that $g^{-1}h \in \LMod(\R^2,P_f)$. Then $g^{-1}h$ is liftable and let $\psi(g^{-1}h)$ denote the lift. In this case, $hf$ is equivalent to $\psi(g^{-1}h)gf$ (cf. \cite[Proposition 4.1]{BaNe} and \cite[Lemma 5.1]{BLMW}).  When $f$ is specified, we use the notation $$h\stackrel{g}{\leadsto}\psi(g^{-1}h)g$$ to indicate equivalence obtained by choosing $g$ as a coset representative of $h \in \LMod(\R^2,P_f)$ and lifting.  Observe that when $h\in\LMod(\R^2,P_f)$, we may choose $g$ to be the identity and we suppress $g$ in the notation.

For $f$ a topological polynomial and $g$ and $h$ mapping classes, whenever $gf \simeq hf$ we say $g \sim h$, where it is understood that this is with respect to the map $f$.

\p{A criterion for trivial lifts} To find the result of lifting a curve, one can simply take the preimage and forget the inessential components. However, there exists a useful criterion that guarantees that the preimage of a curve will have only inessential components. To this end, we prove a generalization of Lemma~5.2 in Belk--Lanier--Margalit--Winarski \cite{BLMW}, extending it to topological polynomials of degree higher than 2. We first require some definitions.

Let $f$ be a unicritical topological polynomial of degree $\degr$.  A {\it branch cut} $b$ for $f$ is a arc in $(\R^2,P_f)$ connecting the critical value of $f$ to $\infty$.  The preimage $f^{-1}(b)$ is a union of $\degr$ arcs that intersect only at the critical point.  The complement of $f^{-1}(b)$ in $\R^2$ has $\degr$ components.  We say that $b$ is a {\it special branch cut} for $f$ if all points in $P_f$ lie in the same component of $f^{-1}(b)$. A special branch cut for $R_3$ (and $R_n$, similarly) is the straight arc from the critical value to $\infty$ that avoids the interior of the triangle determined by $P_{R_3}$; this is depicted as the arc $b$ in Figure~\ref{fig:D_x(z)}.

Let $c$ be a simple closed curve that is isotopic to the boundary of a disk $D$ that contains exactly two marked points.  A {\it defining arc} for $c$ is a simple arc contained in $D$ that has an endpoint at each marked point in $D$.  Up to isotopy, there is a unique defining arc for any such $c$.

\begin{lemma}\label{lem:triviality2.0}
Let $f$ be a topological polynomial of degree $\degr$ and let $b$ be a special branch cut for $f$.  Suppose $c$ is a curve in $(\R^2,P_f)$ that surrounds exactly two points of $P_f$, neither of which is the critical value, and that $a$ is a defining arc for $c$.  If the algebraic intersection number of $a$ and $b$ is not 0 mod $\degr$, then the lift of $D_c$ is trivial.
\end{lemma}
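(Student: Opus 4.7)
The plan is to compute $f^{-1}(c)$ explicitly, show that under the hypothesis each component bounds a disk containing at most one marked point, and conclude that the lift of $D_c$—the product of Dehn twists about these components—is trivial. Since the disk $D$ bounded by $c$ contains only the marked points $p, q$ and avoids the critical value, $f$ restricts to an unramified $\degr$-to-$1$ cover over $D$, so $f^{-1}(D)$ splits as $\degr$ disjoint disks $\widetilde D_1,\ldots,\widetilde D_\degr$ mapping homeomorphically to $D$, with boundaries $\widetilde c_1,\ldots,\widetilde c_\degr$ the components of $f^{-1}(c)$. It suffices to show each $\widetilde D_i$ contains at most one point of $P_f$.

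Next I would use the special-branch-cut hypothesis to locate the marked preimages. The complement $\R^2 \setminus b$ is simply connected, so $f\colon \R^2 \setminus f^{-1}(b) \to \R^2 \setminus b$ is a trivial $\degr$-fold cover; its $\degr$ sheets are the sectors of $\R^2 \setminus f^{-1}(b)$, and each point off $b$ has exactly one preimage per sector. Since $p$ and $q$ are not the critical value, each has a preimage in $P_f$ (namely its predecessor in the critical orbit); and since $P_f$ lies by hypothesis in a single sector—call it sector $1$—only the preimage lying in sector $1$ can be in $P_f$. So each of $p, q$ has a unique preimage in $P_f$, both in sector $1$. Labeling preimages of $p$ and $q$ by sector as $p_1,\ldots,p_\degr$ and $q_1,\ldots,q_\degr$, the marked preimages are exactly $p_1$ and $q_1$.

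Finally I would read off the pairing of preimages inside the $\widetilde D_i$ using the defining arc $a$. Each $\widetilde D_i$ contains the lift of $a$ starting at its contained preimage of $p$ and ending at its contained preimage of $q$. Using the local model $z\mapsto z^\degr$ at the critical point, each transverse crossing of $a$ with $b$ moves the lift between cyclically adjacent sectors, so the lift starting at $p_i$ ends at $q_{i+s \bmod \degr}$, where $s$ is the algebraic intersection number of $a$ with $b$. Thus $\widetilde D_i$ contains $p_i$ and $q_{i+s \bmod \degr}$, and the marked preimages $p_1, q_1$ land in a common $\widetilde D_i$ precisely when $s \equiv 0 \pmod \degr$. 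Under the hypothesis $s \not\equiv 0 \pmod \degr$ they lie in distinct disks, so each $\widetilde D_i$ contains at most one marked point, each $\widetilde c_i$ is inessential, and the lift of $D_c$ is trivial. The main obstacle I expect is the careful sector-labeling and sign bookkeeping in that cyclic tracking argument; once those conventions are fixed the rest is mechanical.
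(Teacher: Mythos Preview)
Your proposal is correct and takes essentially the same approach as the paper. Both arguments hinge on the same observation: lifting the defining arc $a$ through $f$ and tracking how its lifts move between the sectors of $\R^2 \setminus f^{-1}(b)$, with the algebraic intersection number $s$ of $a$ with $b$ recording the net cyclic shift. The paper phrases this as ``the endpoints of each component of $f^{-1}(a)$ lie in different connected components of $\R^2 \setminus f^{-1}(b)$'' and concludes each lifted arc is trivial; you phrase it via the lifted disks $\widetilde D_i$ and conclude each contains at most one marked point. These are the same argument with different packaging. One small remark: your parenthetical that each of $p,q$ ``has a preimage in $P_f$ (namely its predecessor in the critical orbit)'' is stronger than you need and not true for general post-critically finite $f$---but your argument only uses that any marked preimage must lie in sector~$1$, which follows directly from the special branch cut hypothesis.
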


See the discussion of the reduction formulas in Section~\ref{sec:cubic} for an illustrated application of this lemma. Recall that a curve is trivial if it is homotopic to a point (or a boundary component).  The Dehn twist about a curve $\gamma$ is trivial if and only if $\gamma$ is trivial.

\begin{proof}

Since $c$ does not surround the critical value of $f$, the arc $a$ does not have an endpoint at the critical value.  Therefore $f^{-1}(a)$ consists of $\degr$ arcs that are disjoint, including at their endpoints.  Since the algebraic intersection number of $a$ and $b$ is not 0 mod $\degr$, the endpoints of each component of $f^{-1}(a)$ lie in different connected components of $\R^2 \setminus f^{-1}(b)$.  Since a nontrivial arc must have the property that both of its endpoints are in the same connected component of $\R^2 \setminus f^{-1}(b)$ (the component containing $P_f$), all arcs of $f^{-1}(a)$ are trivial. The lift $\psi(D_c)$ is equal to the product of the Dehn twists about the curves of the boundary of a neighborhood of $f^{-1}(a)$.  Since each such curve surrounds at most one point of $P_f$, this product is trivial.\end{proof}

\begin{figure}[ht]
$\underset{\textstyle\text{(a)}}{\includegraphics[width=45mm]{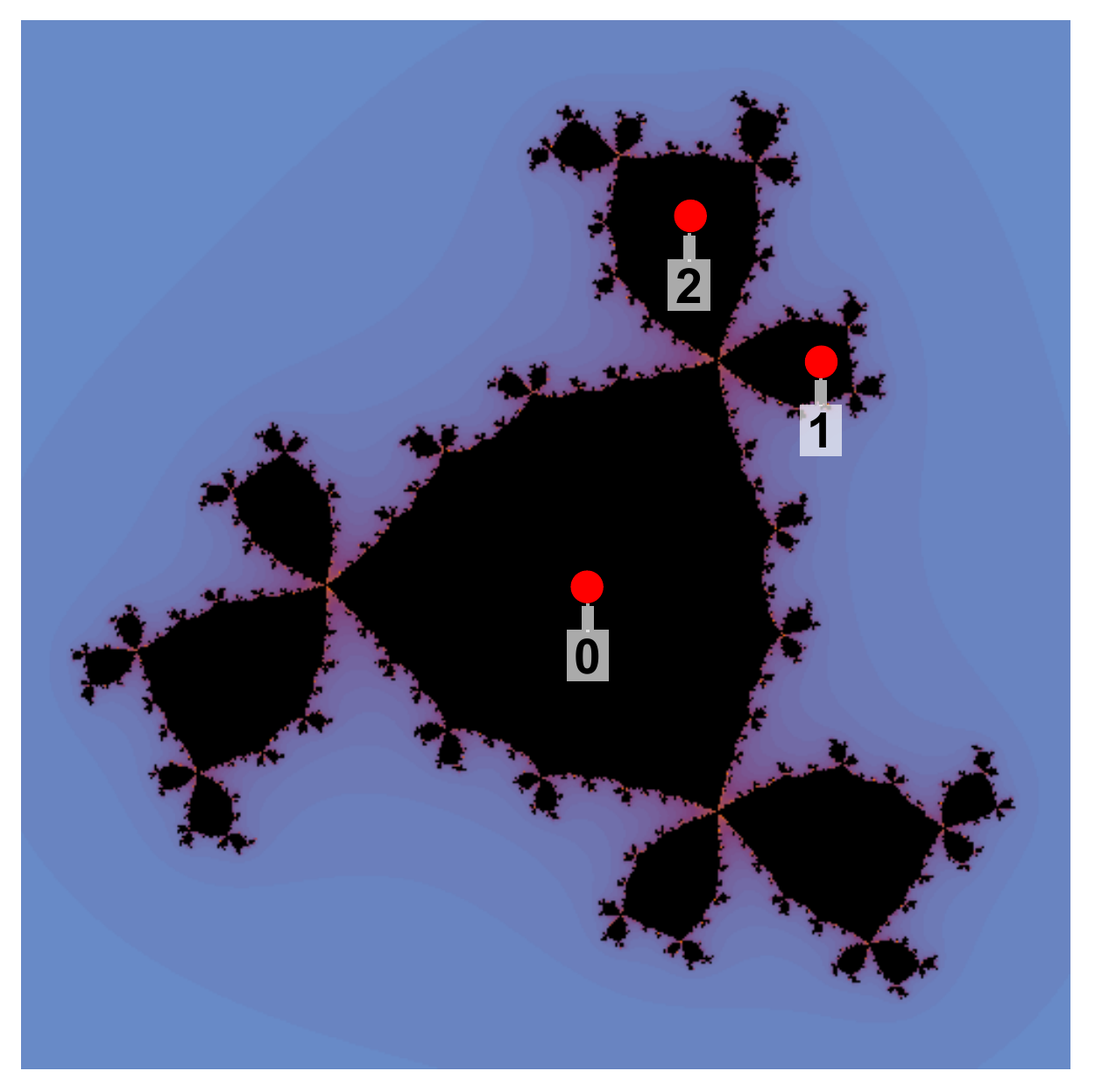}}$
\qquad
$\underset{\textstyle\text{(b)}}{\labellist\small\hair 2.5pt
        \pinlabel $x$ by 0 0 at 95 145
         \pinlabel $y$ by 0 0 at 80 45
        \pinlabel $z$ by 0 0 at 10 90
        \endlabellist\includegraphics[width=25mm]{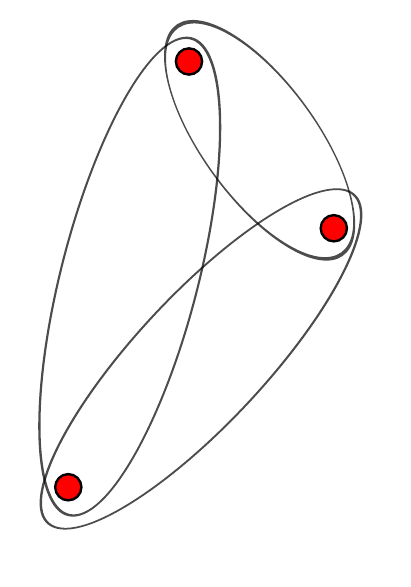}}$
\qquad 
$\underset{\textstyle\text{(c)}}{\includegraphics[width=50mm]{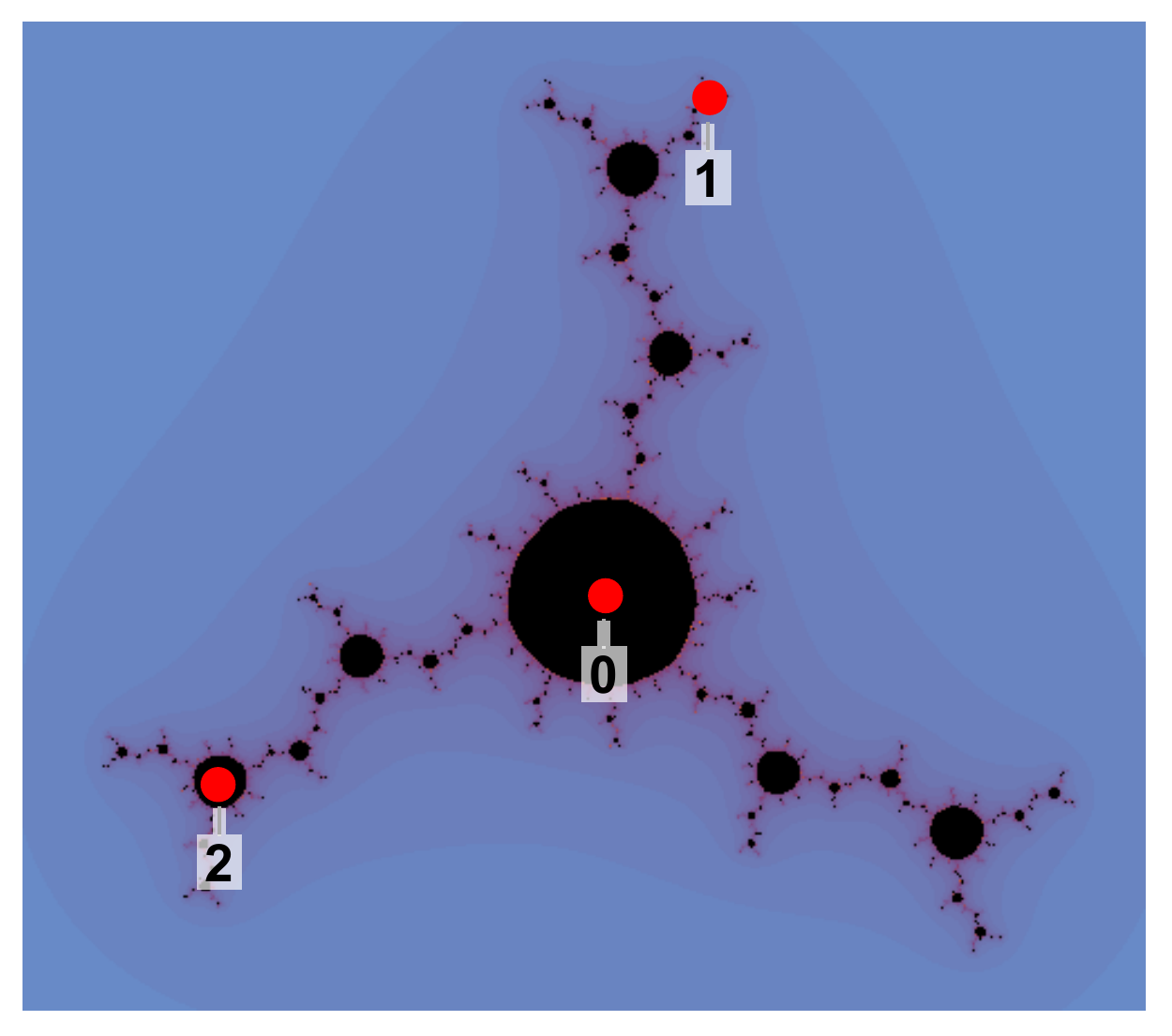}}$       
\caption{(a) The Julia set for the cubic rabbit~$R_3$. (b) The curves $x$, $y$, and $z$ in $(\R^2,P_{R_3})$. (c) The Julia set for the cubic airplane~$A_3$.}
\label{fig:generatorscubic}
\end{figure}

\section{Twisting the cubic rabbit}
\label{sec:cubic}

Every unicritical cubic polynomial is affine conjugate to a polynomial of the form $f(z)=z^3+c$  with critical point $0$. There are eight distinct non-zero solutions to the equation $(c^3+c)^3+c=0$, each of which yields a post-critically finite cubic polynomial with critical portrait $0 \to c \to c^3+c \to 0$. These eight polynomials come in four equivalent pairs; each pair is affine conjugate under the map $z \to -z$. We take as representatives of these four equivalence classes the following four polynomials, which we call the cubic rabbit $R_3$, the cubic corabbit $\overline{R}_3$, the cubic airplane $A_3$, and the cubic coairplane $\overline{A}_3$. These polynomial representatives have the form $z^3+c$, where the four approximate values of $c$ are $0.558+0.540i$, $0.558-0.540i$, $.264+1.260i$, and $.264-1.260i$, respectively. The Julia sets for $R_3$ and $A_3$ are depicted in Figure~\ref{fig:generatorscubic}.

Also depicted in Figure~\ref{fig:generatorscubic} three curves $x$, $y$ and $z$ lying in $(\R^2,P_{R_3})$; they are analogues of the curves in the original twisted rabbit problem. The curve $x$ is obtained as the boundary of a regular neighborhood of the straight line segment between $\Rot_3(0)$ and $\Rot^{2}_3(0)$.

By the Berstein--Levy theorem, a topological polynomial with all post-critical points in a critical cycle is unobstructed \cite{LevyThesis}.  Therefore the composition of a pure mapping class with $R_3$ must be equivalent to one of $R_3$, $\overline{R}_3$, $A_3$, or $\overline{A}_3$. In particular, this is true for all maps $D_x^m R_3$.

\begin{figure}
    \centering
  $\underset{\textstyle\text{(a) $D_x(z)$}}{\labellist\small\hair 2.5pt
        \pinlabel $b$ by 0 0 at 115 40
        \endlabellist   \includegraphics[scale=.6]{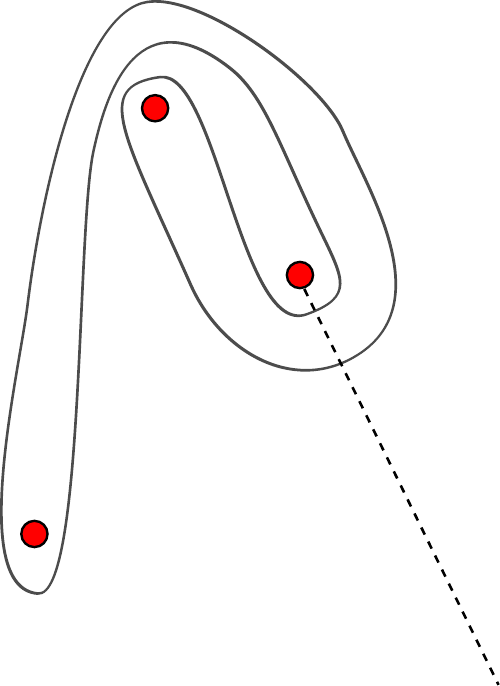}}$\qquad\qquad\qquad
    $\underset{\textstyle\text{(b) The lift of $D_x(z)$}}{\includegraphics[scale=.4]{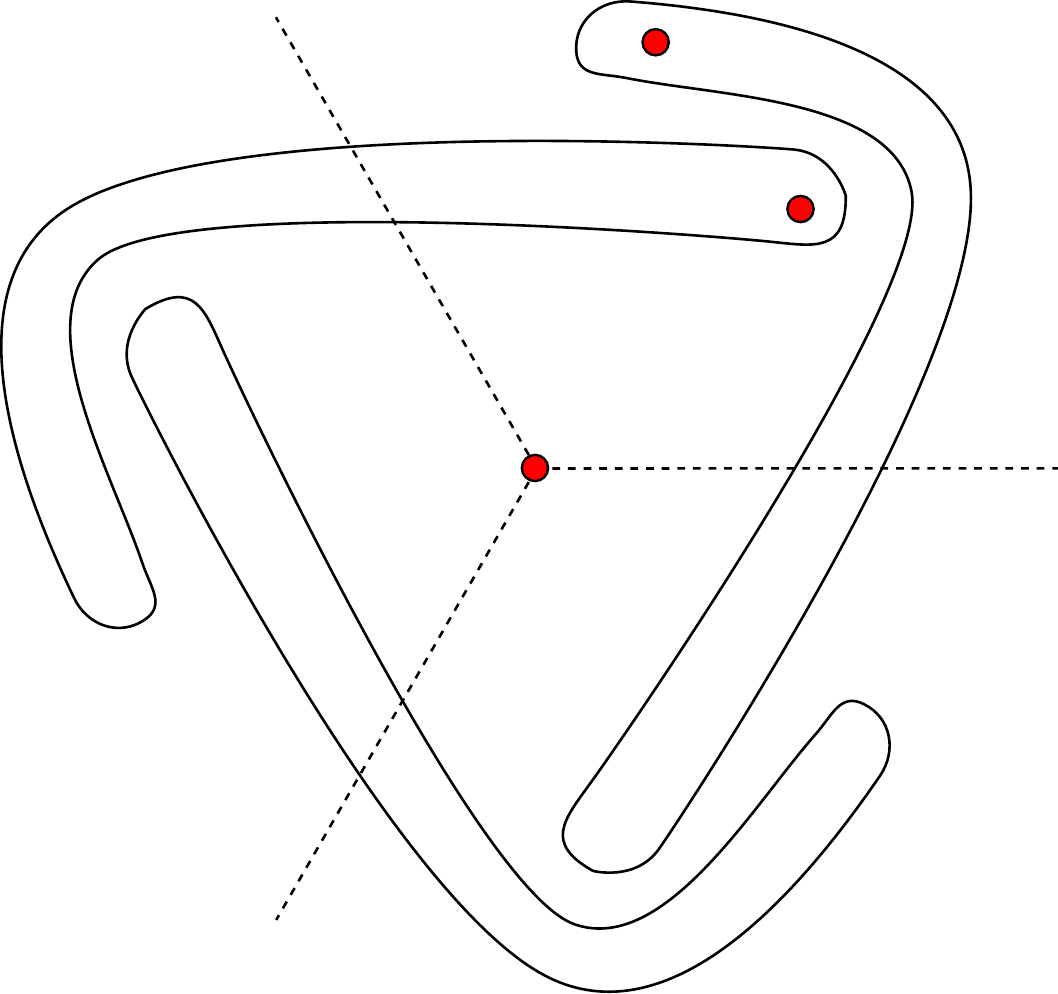}}$
    \caption{In general, the defining arc for $D_x^k(z)$ intersects $b$ in $k$ points.}
    \label{fig:D_x(z)}
\end{figure}

We now describe a topological polynomial that is homotopic (and so also equivalent) to $R_3$ using techniques of combinatorial topology. We describe the map topologically in order to lift isotopy classes of trees and curves through $R_3$ without the need to compute images of analytic maps.  Let $P_{R_3} = \{0,R_3(0),R_3^2(0)\}$ be the post-critical set of $R_3$, and let $\Delta$ be the solid triangle in $\R^2$ with vertex set $P_{R_3}$. Let $\Cub$ be any orientation-preserving triple branched cover $(\R^2,P_{R_3})\to(\R^2,P_{R_3})$ that is branched over 0 and that fixes $\Delta$ pointwise. Any such map fixes any tree contained in $\Delta$.  By the Alexander method, there are two isotopy classes relative to the set $P_{R_3}$ of such branched covers, and these branched covers are equivalent (relative to the set $P_{R_3}$). Let $\Rot_3$ be a homeomorphism of $(\R^2,P_{R_3})$ that rotates the points $P_{R_3}$ counterclockwise and preserves $\Delta$ as a set. Any two such maps are isotopic relative to $P_{R_3}$. Then the map $\Rot_3 \circ \Cub$ is homotopic to the cubic rabbit polynomial relative to $P_{R_3}$; this is straightforward to check using the Alexander method (cf. \cite[Proposition~3.1]{BLMW}).

\subsection{Reduction formulas} We are now ready to explain the first of the two steps in our solution to the twisted cubic rabbit problem. The reduction formulas are:
\[
D_x^{m} R_3 \simeq 
\begin{cases}
D_x^k R_3    & m=9k\\
D_x R_3      & m=9k+1\\
D_x^2 R_3    & m=9k+2\\
D_y R_3      & m=9k+3\\
D_x^k R_3    & m=9k+4\\
D_x R_3      & m=9k+5\\
D_y^2 R_3    & m=9k+6\\
D_x^{-2} R_3 & m=9k+7\\
D_x^k R_3    & m=9k+8\\
\end{cases}.
\]

To verify these formulas, we apply a number of facts about lifting individual curves and their corresponding Dehn twists through $R_3$. For instance, the curve $z$ has three preimages under $R_3$, and the only essential one is homotopic to $x$. Therefore, as in the original twisted rabbit problem, $D_z$ is in $\LMod(\R^2,P_{R_3})$ and we have $D_z \leadsto D_x$.   The preimage of $x$ has a single component, isotopic to $y$.  Therefore $D_x$ is not in $\LMod(\R^2,P_{R_3})$, but $D_x^3$ is in $\LMod(\R^2,P_{R_3})$ and $D_x^3 \leadsto D_y$.  Similarly $D_y^3 \leadsto D_z$. We illustrate this lifting process for several additional curves in Figures~\ref{fig:D_x(z)} and \ref{fig:D_x^-1(y)}.

We now give an example of Lemma~\ref{lem:triviality2.0} in action, showing that $D_x^kD_zD_x^{-k}=D_{D_x^{k}(z)} \leadsto \textrm{id}$ for $k\not\equiv 0\mod 3$ (similarly, $D_{D_y^k(z)} \leadsto \textrm{id}$ for $k\not\equiv 0\mod 3$). Figure~\ref{fig:D_x(z)} illustrates the case $k=1$. Indeed, let $a$ be the the defining arc of $z$.  Then $a$ is disjoint from the special branch cut $b$ and intersects $x$ only once.  Therefore the geometric intersection of $b$ with  $D_x^k(a)$ is $\pm k$ (this is a special case of \cite[Proposition 3.4]{primer} for arcs).  In this situation, the absolute value of the algebraic intersection of $D^k_x(a)$ and $b$ is the same as the geometric intersection because all intersections between $b$ and $D^k_x(a)$ arise from twisting $a$ about $x$ in the same direction. The same argument applies to $D_y^k(z)$.  

We are now prepared to justify the reduction formulas.\\

\noindent \emph{Case 1: $m=9k$.} In this case we have
\[
D_x^{9k}=(D_x^3)^{3k} \ 
\leadsto \ 
D_y^{3k} = (D_y^3)^k \ 
\leadsto \ 
D_z^k \ 
\leadsto \ 
D_x^k.
\]
Thus $D_x^{9k} \sim D_x^k$, as desired.

\bigskip
\begin{figure}
    \centering
  $\underset{\textstyle\text{(a) $D_x^{-1}(y)$}}{\labellist\small\hair 2.5pt
        \pinlabel $b$ by 0 0 at 115 40
        \endlabellist   \includegraphics[scale=.6]{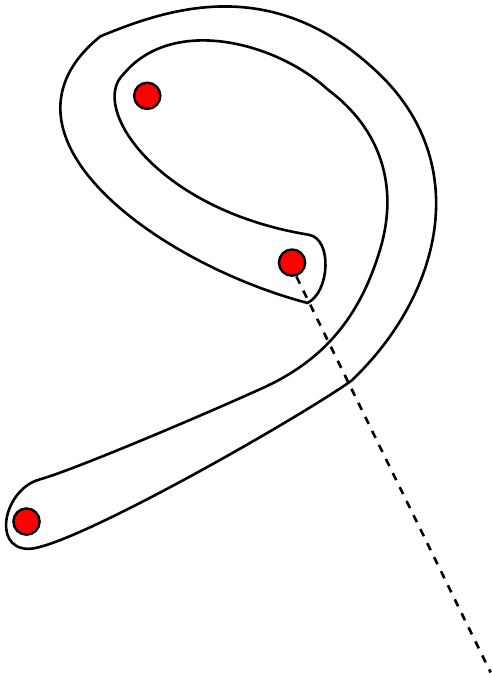}}$\qquad\qquad\qquad
    $\underset{\textstyle\text{(b) $R_3^{-1}(D_x^{-1}(y))$}}{\includegraphics[scale=.4]{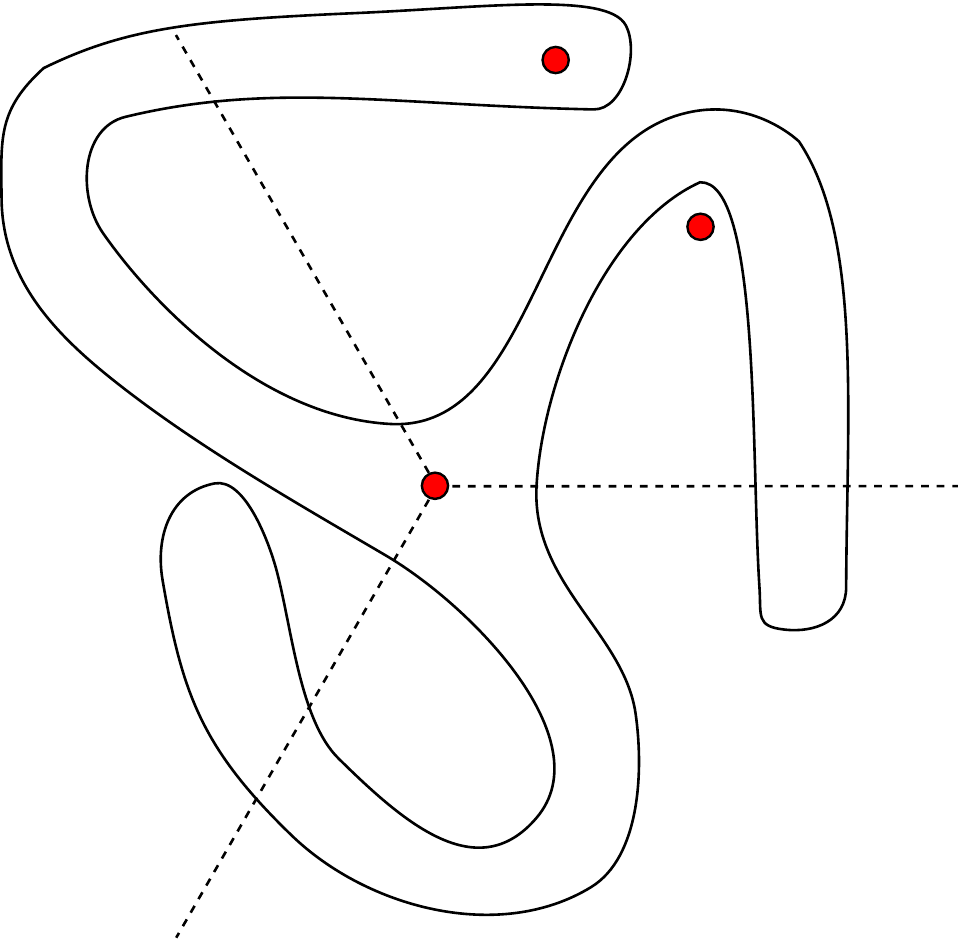}}$
    \caption{The curve $R_3^{-1}(D_x^{-1}(y))$ is homotopic to $z$.}
    \label{fig:D_x^-1(y)}
\end{figure}
 \noindent\emph{Case 2: $m=9k+1$.}  
 In this case we require one additional fact, namely that  $D^3_{D_x^{-1}(y)} \leadsto D_z$.  This follows from the fact that the preimage of the curve $D_x^{-1}(y)$ has a single component, which is isotopic to $z$; see Figure \ref{fig:D_x^-1(y)}.  We have
\[
D_x^{9k+1} \ 
\stackrel{D_x}{\leadsto} \ 
D_y^{3k}D_x \ 
\stackrel{D_x}{\leadsto} \ 
D_z^k D_x \ 
\stackrel{D_x}{\leadsto} \ 
\psi(D_{D_x^{-1}(z)}^k) D_x \ 
=
D_x.
\]

Thus, $D_x^{9k+1} \sim D_x$, as desired.

\bigskip

\noindent\emph{Case 3: $m=9k+2$.} In this case we use the fact that the preimage of the curve $D_x^{-2}(y)$ has a single component, which is isotopic to $z$.
\[ D_x^{9k+2} \ 
\stackrel{D_x^{2}}\leadsto \  D_y^{3k}D_x^2 \ 
\stackrel{D_x^2}{\leadsto} \ 
D_z^k D_x^2 \ 
\stackrel{D_x^2}{\leadsto} \ 
\psi(D_{D_x^{-2}(z)}^k) D_x^2 \ 
=
D_x^2.\]
Thus $D_x^{9k+2}\sim D_x^2$ as desired.
\bigskip

\noindent\emph{Case 4: $m=9k+3$.}  
We have 
\begin{align*}
D_x^{9k+3} \ 
{\leadsto} \ 
D_y^{3k+1} \ 
\stackrel{D_y}\leadsto \ 
D_z^k D_y \ 
\stackrel{D_y}\leadsto \ 
\psi(D_{D_y^{-1}(z)}^k) D_y \ 
=
D_y.
\end{align*}
Thus, $D_x^{9k+3} \sim D_y$, as desired.

\bigskip

\noindent\emph{Case 5: $m=9k+4$.}  In this case we have 
\begin{align*}
D_x^{9k+4} \ 
&\stackrel{D_x}{\leadsto} \
D_y^{3k+1}D_x=D_y^{3k}D_x^{-1}D_z^{-1}D_x = D_y^{3k}D_{D_x^{-1}(z)}^{-1} \ 
\leadsto \ 
D_z^k \ 
\leadsto
D_x^k,
\end{align*}
where we used the lantern relation in the first equality.  Then both $D_y^{3k}$ and $D_{D_x^{-1}(z)}^{-1}$ lift, where the latter lifts to the identity by Lemma~\ref{lem:triviality2.0}.  Thus, $D_x^{9k+4} \sim D_x^k$, as desired.

\bigskip

\noindent\emph{Case 6: $m=9k+5$.}  We use two facts that have not previously appeared.  First, the preimage of $D_x(y)$ is a single component that is homotopic to $D_y(z)$.  Second, the curve defining arc of $D_xD_y^2(z)$ has algebraic intersection 1 with $b$, therefore the preimage of $D_xD_y^2(z)$ is trivial by Lemma~\ref{lem:triviality2.0}.  We have
\begin{align*}
D_x^{9k+5} \ 
&\stackrel{D_x^{-1}}{\leadsto} \ 
D_y^{3k+2}D_x^{-1}   \stackrel{D_y^{-1}D_x^{-1}}{\leadsto} \ 
\psi(D_x D_y^{3k+3} D_x^{-1})D_y^{-1}D_x^{-1}=
(D_y D_z^{k+1} D_y^{-1}) D_y^{-1}D_x^{-1}\\
&\stackrel{D_y^{-1}D_x^{-1}}\leadsto
\psi(D_xD_y^2D_z^{k+1}D_y^{-2}D_x^{-1})D_y^{-1}D_x^{-1}=D_y^{-1}D_x^{-1}=
D_z
\leadsto
D_x
\end{align*}
where the final equality is an application of the lantern relation.
Thus, $D_x^{9k+5} \sim D_x$, as desired.

\bigskip

\noindent\emph{Case 7: $m=9k+6$.}  We have: 
\begin{align*}
D_x^{9k+6} \
{\leadsto} \ 
D_y^{3k+2} \ 
\stackrel{D_y^2}\leadsto \ 
D_z^k D_y^2 \ 
\stackrel{D_y^2}\leadsto \ 
\psi(D_{D_y^{-2}(z)}^k) D_y^2 \ 
=
D_y^2.
\end{align*}
Thus, $D_x^{9k+6} \sim D_y^2$, as desired.

\bigskip

\noindent\emph{Case 8: $m=9k+7$.}  In this case we use the fact that the preimage of $D_x^2(y)$ consists of single connected component, which is homotopic to $D_{x}^{-1}(z)$. We have:
\begin{align*}
D_x^{9k+7} \ 
\stackrel{D_x^{-2}}{\leadsto} \ 
D_y^{3k+3}D_x^{-2} \ 
\stackrel{D_x^{-2}}\leadsto \ 
D_{D_x^{-1}(z)}^{k+1}D_x^{-2} \ 
\stackrel{D_x^{-2}}\leadsto \ 
\psi(D_x^2 D_{D_x^{-1}(z)}^{k+1}D_x^{-2}) D_x^{-2} =
\psi(D_{D_x(z)}^{k+1})D_x^{-2}=D_x^{-2}.
\end{align*}
Thus, $D_x^{9k+7} \sim D_x^{-2}$, as desired.

\bigskip

\noindent\emph{Case 9: $m=9k+8$.}  In this case we use the fact that the preimage of the curve $D_x(y)$ consists of single connected component, which is homotopic to $D_y(z)$. We also apply the lantern relation to show that $D_xD_y(z)=z$.  We have
\begin{align*}
D_x^{9k+8} \ 
\stackrel{D_x^{-1}}{\leadsto} \
D_y^{3k+3} D_x^{-1} \ 
\stackrel{D_x^{-1}}\leadsto \ 
D_{D_y(z)}^{k+1} D_x^{-1} \ 
\stackrel{D_x^{-1}}\leadsto \ 
\psi(D_x D_{D_y(z)}^{k+1} D_x^{-1})D_x^{-1}=\psi(D_z^{k+1}) D_x^{-1}=
D_x^k.
\end{align*}
Thus, $D_x^{9k+8} \sim D_x^k$, as desired.

\subsection{Base cases} The second step in our proof of Theorem~\ref{thm:cubicrabbit} is to determine the polynomials to which the maps $D_xR_3$, $D_x^2R_3$, $D_yR_3$, $D_y^2R_3$, and $D_x^{-2}R_3$ are equivalent.  

The Hubbard trees for $R_3$ and $\overline{R}_3$ are tripods and the Hubbard trees for $A_3$ and $\overline{A}_3$ are paths of length 2. This can be see from their Julia sets, as shown in Figure~\ref{fig:generatorscubic}. As discussed in Section~\ref{sec:background}, the Hubbard tree, an invariant angle assignment on the tree, and the dynamical map on the edges of the tree induced by the polynomial, suffice to determine the polynomial.  The polynomial $R_3$ maps the edges of its Hubbard tree (a tripod) counterclockwise, while $\overline{R}_3$ maps the edges of its Hubbard tree clockwise. In both cases the angles at the leaves are $2\pi$ and the angles at the vertex of valence 3 each have measure $\frac{2\pi}{3}$. The polynomial $A_3$ supports an invariant angle assignment where the counterclockwise angle from edge $e_1$ and to edge $e_2$ is $2\pi/3$, while for $\overline{A}_3$ this angle is $4\pi/3$. In each case, $e_1$ is the edge from the critical point $0$ and the critical value. 

Therefore, in order to determine the equivalence class of $g R_3$ for $g \in \PMod(\mathbb{R}^2,P_{R_3})$, we first find a tree that is invariant under the lifting map. If it is a path of length 2 that supports an invariant angle assignment, then this is the topological Hubbard tree for $g R_3$ and the map is equivalent to either $A_3$ or $\overline{A}_3$; the invariant angle assignment distinguishes between the two possibilities. Moreover, the invariant angle assignment can be recovered from the full preimage: the critical value is a leaf of the topological Hubbard tree, and therefore the single angle adjacent to it has measure $2\pi$.  In the full preimage of the topological Hubbard tree, the critical point is trivalent, and each adjacent angle will have measure $\frac{2\pi}{3}$.  Because the critical point is bivalent in the topological Hubbard tree, one of the edges of the full preimage is not in the hull.  When this edge is removed, the angle on the side of the topological Hubbard tree from which the extra edge was removed will have angle $\frac{4\pi}{3}$. If the invariant tree for $g R_3$ is instead a tripod, then this is the topological Hubbard tree for the map and and the direction in which the dynamical map on the edges of the tree rotates the edges distinguishes between the two possibilities $R_3$ and $\overline{R}_3$.

\begin{figure}[ht]
\centering
$\underset{\textstyle\text{(a)}}{\labellist\small\hair 2.5pt
        \pinlabel $e_1$ by 0 0 at 70 15
        \pinlabel $e_2$ by 0 0 at 53 50
        \pinlabel $p_0$ by .5 .5 at 0 0
         \pinlabel $p_1$ by -1 1 at 116 58
        \pinlabel $p_2$ by 0 0 at 81 85
        \endlabellist
        \includegraphics[scale=.5]{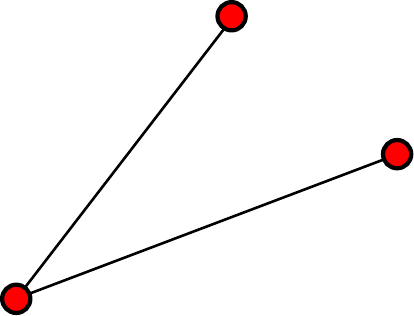}}$
\qquad\qquad
$\underset{\textstyle\text{(b)}}{\labellist\small\hair 2.5pt
        \pinlabel $e_1$ by 0 0 at 28 95
        \pinlabel $e_2$ by 0 0 at 53 50
        \pinlabel $p_0$ by .5 .5 at 0 0
         \pinlabel $p_1$ by -1 1 at 116 58
        \pinlabel $p_2$ by 0 0 at 81 85
        \endlabellist
        \includegraphics[scale=.5]{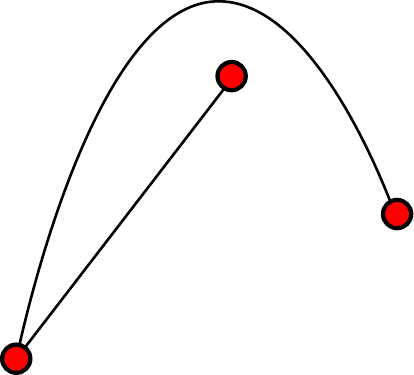}}$
\qquad\qquad
$\underset{\textstyle\text{(c)}}{\labellist\small\hair 2.5pt
        \pinlabel $e_1$ by 0 0 at 70 40
        \pinlabel $e_2$ by 0 0 at 95 90
        \pinlabel $p_0$ by .5 .5 at 0 0
         \pinlabel $p_1$ by -1 1 at 116 58
        \pinlabel $p_2$ by 1 0 at 60 85
        \endlabellist
        \includegraphics[scale=.5]{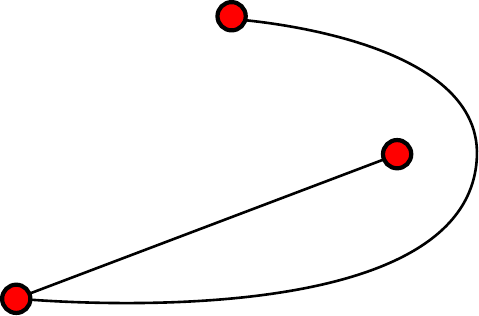}}$
\qquad\qquad
$\underset{\textstyle\text{(d)}}{\labellist\small\hair 2.5pt
        \pinlabel $p_0$ by .5 .5 at 0 0
         \pinlabel $p_1$ by -1 1 at 85 58
        \pinlabel $p_2$ by 1 0 at 60 85
        \endlabellist\includegraphics[scale=.5]{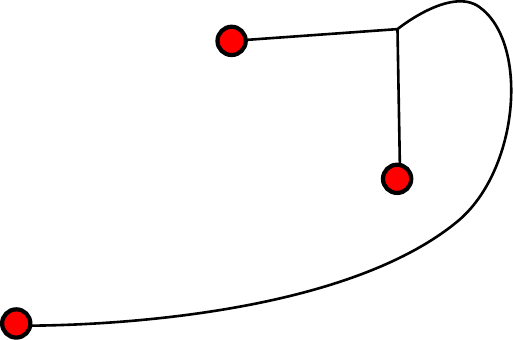}}$

\caption{(a)~The topological Hubbard tree for $D_xR_3$ and $D_x^2R_3$. (b)~The topological Hubbard tree for $D_yR_3$ and $D_y^2R_3$. (c)~The topological Hubbard tree for $D_x^{-2}R_3$. (d)~The topological Hubbard tree for $D_x^{-1}R_3$.}
\label{fig:twistedcubictrees}
\end{figure}

\begin{proof}[Proof of Theorem~\ref{thm:cubicrabbit}]
If $m\geq 0$ and the 9-adic expansion of $m$ contains only 0's, 4's, and 8's, then the reduction formulas reduce $D_x^m R_3$ to $R_3$. 

If $m<0$ and the 9-adic expansion of $m$ contains only 0's, 4's, and 8's, the reduction formulas reduce $m$ to $\dots 888=-1$.  So in this case we have $D_x^m R_3 \simeq D_x^{-1}R_3$. The topological Hubbard tree for $D_x^{-1}R_3$ is shown in Figure~\ref{fig:twistedcubictrees}. Since $D_x^{-1}R_3$ maps the edges of this tripod clockwise (relative to the trivalent point), we have $D_x^{-1}R_3 \simeq \overline{R}_3$.

If there exists a digit in the 9-adic expansion of $m$ that is not 0, 4, or 8, let $\dig$ be the first such digit.  Then the reduction formulas determine that $D_x^mR_3$ is equivalent to one of: $D_xR_3$, $D_x^2R_3$, $D_yR_3$, $D_y^2R_3$, or $D_x^{-2}R_3$, depending on $\dig$.  The topological Hubbard tree for each of these base cases is depicted in Figure~\ref{fig:twistedcubictrees}.  Each of these topological Hubbard trees is a path of length 2, so the corresponding maps are equivalent to either $A_3$ or $\overline{A}_3$. Further, the trees for $D_xR_3$ and $D_y^2R_3$ support an invariant angle assignment with angle $\angle(e_1,e_2)=2\pi/3$, and so $D_xR_3$ is equivalent to $A_3$. Similarly, the trees for $D_x^2 R_3,\, D_yR_3$ and $D_x^{-2}R_3$ support an angle structure with angle $\angle(e_1,e_2)=4\pi/3$, and so $D_x^2 R_3$ is  equivalent to $\overline{A}_3$. (Angles are measured counterclockwise from $e_1$ to $e_2$).
\end{proof}

\section{The general solution for the twisted cubic rabbit}
\label{sec:algo}

In this section we describe two closely related algorithms for determining the equivalence class of $g R_3$, where $g$ is an arbitrary pure mapping class in $\PMod(\mathbb{R}^2,P_{R_3})$. We first give a ``whole word" algorithm that is directly analogous to the one given by  Bartholdi--Nekrashevych for the quadratic rabbit \cite[Section 4.1]{BaNe}. The proof we give for this algorithm follows their approach via the theory of self-similar groups. Second, we give a ``prefix" algorithm and justify it through an elementary argument using word length. Finally, we give some computational results of applying these algorithms.

\subsection{Whole-word algorithm} The pure mapping class group $\PMod(\mathbb{R}^2,P_{R_3})$ is a free group generated by $D_x$ and $D_z$ as a free basis. There is an index 3 subgroup  $\LMod(\R^2,P_{R_3})< \PMod(\mathbb{R}^2,P_{R_3})$ consisting of elements that are liftable through $R_3$ to pure mapping classes. This subgroup is generated by the following elements:

\[ \mathcal{L} = 
\{D_x^3, \ D_z, \ D_x^{-1}D_zD_x, \ D_xD_zD_x^{-1}\}
\]

In the previous section we showed the following facts:

\[
D_x^3 \leadsto D_y=D_x^{-1}D_z^{-1} \ \ \ \ D_z \leadsto D_x \ \ \ \ D_x^{-1}D_zD_x \leadsto id \ \ \ \ D_xD_zD_x^{-1} \leadsto id.
\]
\vskip.15in
\noindent These facts yield a homomorphism $\psi: \LMod(\mathbb{R}^2,P_{R_3}) \to \PMod(\mathbb{R}^2,P_{R_3})$.

We can extend the homomorphism $\psi$ to a well-defined set map $\overline{\psi}$ from $\PMod(\mathbb{R}^2,P_{R_3})$ to itself as follows:

\[
\overline{\psi}: g \mapsto 
\begin{cases}
\psi(g),& g \in \LMod(\mathbb{R}^2,P_{R_3})\\

\psi(D_x^{-1}g)D_x,& D_x^{-1}g \in \LMod(\mathbb{R}^2,P_{R_3})\\

\psi(D_xg)D_x^{-1},& D_xg\in \LMod(\mathbb{R}^2,P_{R_3})\\
\end{cases}
\]\\

\noindent By \cite[Lemma 5.1]{BLMW}, we have: that $gR_3 \simeq \overline{\psi}(g)R_3$ for all $g\in \PMod(\mathbb{R}^2,P_{R_3})$. 

\p{Wreath recursions} Following Bartholdi--Nekrashevych \cite[Proposition~4.2]{BaNe}, we encode the map $\bar{\psi}$ using a wreath recursion; see their Section~2.2 for additional background. Note that our convention of function composition notation runs opposite to their group theoretic notation. A wreath recursion is a homomorphism $\Phi: G \to G \wr \Sigma_n$ where $G$ is a group and $\Sigma_n$ is a symmetric group acting on $X=\{1,\dots,n\}$. We write elements of the wreath product in the form $\sigma \langle\langle g_n,\dots,g_1 \rangle\rangle $, with $\sigma \in \Sigma_n$ and $g_i \in G$. If $\sigma$ is the identity in $\Sigma_n$ or if all of the $g_i$ are the identity in $G$, these may be suppressed in the notation. Multiplication in the wreath recursion is carried out through two rules. First, two adjacent elements in angle brackets are multiplied in the group $G$ entrywise: $\langle\langle h_n,\dots,h_1 \rangle\rangle \langle\langle g_n,\dots,g_1 \rangle\rangle = \langle\langle h_ng_n,\dots,h_1g_1 \rangle\rangle$. Second, an element $\sigma$ acts by permutation on the indices of the $g_i$ when pushed past an angle bracket term: $\langle\langle g_n,\dots,g_1 \rangle\rangle \sigma=\sigma \langle\langle g_{\sigma(n)},\dots,g_{\sigma(1)} \rangle\rangle$. For a wreath recursion $\Phi$ and $g \in G$, the restriction map $g|_i$ is the $i$th coordinate of $\Phi(g)$. Restriction maps can be composed; inductively we put $g|_{xv}= (g|_v)|_x$ for all $x \in X$ and $v \in X^*$, the set of finite words in $X$.

A benefit of this coding is that wreath recursions have a simple criterion that determines when there exists a finite set to which the restriction map contracts under iteration (i.e. a nucleus): a self-similar action of a group $G$ with finite symmetric generating set $S$ (with $1 \in S$) is contracting if and only if there exists a finite set $\mathcal{N} \subset G$ and a number $k \in \mathbb{N}$ such that $((S \cup \mathcal{N})^2)|_{X^k} \subseteq \mathcal{N}$ \cite[Lemma~2.11.2]{SSG}. The GAP package AutomGrp can be used to compute the nucleus of a contracting wreath recursion \cite{AutomGrp1.3.2}. 

\begin{theorem}
Iterating $\overline{\psi}$ on an element $g \in \PMod(\mathbb{R}^2,P_{R_3})$ yields exactly one member of the following set: $\{id, D_x, D_x^{-1}, D_zD_x^2\}$. The equivalence class of $g R_3$ is then determined as follows:
\[
g R_3 \simeq
\begin{cases}
R_3,&\text{ if $\overline{\psi}^n(g)=id$}\\
\overline{R}_3,&\text{ if $\overline{\psi}^n(g)=D_x^{-1}$}\\
A_3,&\text{ if $\overline{\psi}^n(g)=D_x$}\\
\overline{A}_3,&\text{  if $\overline{\psi}^n(g)=D_zD_x^2$}\\
\end{cases}
\]
\end{theorem}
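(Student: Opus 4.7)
My plan is to follow the self-similar group strategy of Bartholdi--Nekrashevych \cite{BaNe}. First, I would upgrade the set map $\overline{\psi}$ to a genuine wreath recursion $\Phi\colon \PMod(\R^2, P_{R_3}) \to \PMod(\R^2, P_{R_3}) \wr \Sigma_3$. Because $D_x^3, D_z \in \LMod(\R^2, P_{R_3})$ and the quotient $\PMod/\LMod$ is cyclic of order $3$, the subgroup $\LMod$ is normal of index $3$ with transversal $\{1, D_x, D_x^2\}$. Assembling the lifting data collected in Section~\ref{sec:cubic}---namely $\psi(D_z) = D_x$, $\psi(D_x^{\pm 1} D_z D_x^{\mp 1}) = 1$, and $\psi(D_x^3) = D_y = D_x^{-1}D_z^{-1}$---one computes
\[
\Phi(D_z) = \langle\langle 1,\, 1,\, D_x \rangle\rangle, \qquad \Phi(D_x) = (1\,2\,3)\,\langle\langle D_x^{-1}D_z^{-1},\, 1,\, 1 \rangle\rangle.
\]
By construction $\overline{\psi}(g)$ agrees, up to right multiplication by the coset representative of $g$, with the restriction of $g$ at a distinguished sheet, so iterating $\overline{\psi}$ corresponds to descending a single branch of the self-similar tree.

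Second, I would show that $\Phi$ is contracting with nucleus
\[
\mathcal{N} = \{1,\, D_x,\, D_x^{-1},\, D_z D_x^2,\, D_x^{-1} D_z^{-1} D_x^{-1}\}.
\]
By Nekrashevych's criterion \cite[Lemma 2.11.2]{SSG}, contraction reduces to the finite check that for some depth $k$ every length-at-most-$2$ word over $S \cup \mathcal{N}$ restricts into $\mathcal{N}$, where $S$ is any symmetric generating set. This check is conveniently carried out with the GAP package AutomGrp \cite{AutomGrp1.3.2}. A short direct calculation then shows that $\overline{\psi}$ fixes each of $1$, $D_x$, and $D_x^{-1}$ and swaps $D_z D_x^2$ with $D_x^{-1} D_z^{-1} D_x^{-1}$. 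Contraction guarantees that for any $g$ the sequence $\overline{\psi}^n(g)$ eventually enters $\mathcal{N}$; with the convention that iteration halts the moment one of the four listed elements appears, the output is well-defined and lies in $\{1, D_x, D_x^{-1}, D_z D_x^2\}$.

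Third, I would identify the four outputs with the four polynomials. For $g \in \{1, D_x, D_x^{-1}\}$ the identification was already made in the proof of Theorem~\ref{thm:cubicrabbit} via explicit topological Hubbard trees, giving $R_3$, $A_3$, and $\overline{R}_3$ respectively. For $g = D_z D_x^2$, one further reduction suffices: since $D_z \in \LMod$ with $\psi(D_z) = D_x$, we have $D_z D_x^2 R_3 \stackrel{D_x^2}{\leadsto} D_x^3 R_3 \leadsto D_y R_3$, and $D_y R_3 \simeq \overline{A}_3$ is among the base cases already analyzed.

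The principal obstacle is the contraction verification. Since $\psi(D_x^3) = D_x^{-1}D_z^{-1}$ has the same word length as $D_x^3$, restrictions do not strictly shrink at every step, so one must iterate to a sufficient depth before a uniform bound takes effect. I would handle this via the finite machine check described above, or sidestep it entirely by first proving the prefix algorithm of the next subsection, whose more elementary word-length argument can then be invoked.
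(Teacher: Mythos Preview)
Your overall strategy---encoding $\overline{\psi}$ via a wreath recursion, proving contraction, and then analyzing the dynamics on a finite set---is exactly the paper's approach. But there is a genuine confusion in your second step that, as written, breaks the argument.

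You conflate the nucleus of the wreath recursion $\Phi$ with the eventual range of $\overline{\psi}$. These are different objects. Contraction of $\Phi$ tells you that the \emph{restrictions} $g|_v$ eventually lie in the nucleus $\mathcal{N}$; but you yourself note that $\overline{\psi}(g)$ equals the restriction $g|_1$ only \emph{up to right multiplication by a coset representative} $D_x^{\pm 1}$. Hence contraction gives only that $\overline{\psi}^n(g)\in \mathcal{N}\cup \mathcal{N}D_x\cup \mathcal{N}D_x^{-1}$ for large $n$, not that it lands in $\mathcal{N}$ itself. The paper makes exactly this distinction: it computes the nucleus as $\mathcal{N}=\{id,\,D_x,\,D_x^{-1},\,D_zD_x,\,D_x^{-1}D_z^{-1}\}$---not the set you wrote down---and then separately analyzes the dynamics of $\overline{\psi}$ on the larger finite set $\mathcal{N}\cup\mathcal{N}D_x\cup\mathcal{N}D_x^{-1}$. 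Your proposed ``nucleus'' $\{1,D_x,D_x^{-1},D_zD_x^2,D_x^{-1}D_z^{-1}D_x^{-1}\}$ is in fact the union of the fixed points and the $2$-cycle of $\overline{\psi}$ on that larger set, not the nucleus of $\Phi$; in particular it is \emph{not} closed under all restrictions, so the Nekrashevych criterion you cite would fail for it.

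There is also a computational slip in your third step. With the paper's conventions, $D_zD_x^2 \stackrel{D_x^2}{\leadsto} \psi(D_x^{-2}D_zD_x^2)\,D_x^2$, and since $D_x^{-2}D_zD_x^2=D_{D_x^{-2}(z)}$ lifts trivially by Lemma~\ref{lem:triviality2.0}, this gives $D_x^2$, not $D_x^3$. The conclusion $D_zD_x^2R_3\simeq\overline{A}_3$ is nonetheless correct, since $D_x^2R_3\simeq\overline{A}_3$ is among the base cases, and the paper reaches the same identification via $D_zD_x^2\sim D_x^{-2}$.
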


\begin{proof}
Consider the wreath recursion $\Phi: \PMod(\mathbb{R}^2,P_{R_3}) \to \PMod(\mathbb{R}^2,P_{R_3}) \wr \Sigma_3$ given by
\[
\Phi(D_x)=\rho \langle\langle D_x^{-1}D_z^{-1},id,id \rangle\rangle  \ \ \ \ \text{   and   } \ \ \ \ \Phi(D_z)=\langle\langle id,id,D_x \rangle\rangle
\]

\noindent where $\rho$ is the permutation $(132)$: $\rho(1)=3$, $\rho(3)=2$, $\rho(2)=1$.

This wreath recursion encodes the map $\overline{\psi}$; computing $\overline{\psi}(g)$ is the same as computing $g|_1$, and then adjusting the result according to the value of the accompanying permutation factor. For $g \in \mathcal{L}$, we have that $\bar{\psi}(g)=g|_1$ and that $\Phi(g)$ has trivial permutation factor; since $\Phi$ is a homomorphism, the same is true for all $g \in \LMod(\mathbb{R}^2,P_{R_3})$. In the case where $D_x^{-1}g \in \LMod(\mathbb{R}^2,P_{R_3})$, we have that $\Phi(g)= \rho \langle \langle g_3,g_2,g_1 \rangle\rangle$ and
\[
\bar{\psi}(g)=\psi(D_x^{-1}g)D_x=(D_x^{-1}g)|_1D_x=
(\rho^2 \langle \langle id, D_z D_x,id \rangle \rangle \rho \langle \langle g_3,g_2,g_1 \rangle\rangle)|_1 D_x=
g|_1 D_x
\]
Similarly, in the case where $D_x g \in \LMod(\mathbb{R}^2,P_{R_3})$, we have $\bar{\psi}(g)=g|_1 D_x^{-1}$. Since $D_x|_1=D_x^{-1}|_1=id$, we have by induction for all $n \in \mathbb{N}$ that 
\[\bar{\psi}^n(g)=g|_v \text{ \ \  or  \ \  } \bar{\psi}^n(g)=g|_v D_x \text{ \ \  or  \ \  } \bar{\psi}^n(g)=g|_v D_x^{-1} \text{  \ \  for some \ \ } v \in X^n.
\]

A small computation yields the following nucleus set for the restriction map for $\Phi$:
\[
\mathcal{N} = \{id, \ D_x, \ D_x^{-1}, \  D_zD_x, \ D_x^{-1}D_z^{-1}\}
\]

\noindent Therefore for all $g \in \PMod(\mathbb{R}^2,P_{R_3})$ we have that  $\bar{\psi}^n(g) \in \mathcal{N} \cup \mathcal{N}D_x^{-1} \cup \mathcal{N}D_x$ for all sufficiently large $n$. It then suffices to analyze the dynamics of $\overline{\psi}$ acting on this finite set; the only cycles are the fixed points $id$, $D_x$, and $D_x^{-1}$ and the 2-cycle on $D_zD_x^2$ and $D_x^{-1}D_z^{-1}D_x^{-1}$. Taking $D_zD_x^2$ as the representative for the 2-cycle yields the four elements $\{id, D_x, D_x^{-1}, D_zD_x^2\}$ in the theorem statement. The computations of the base cases in Section \ref{sec:cubic} then yield the result, since $D_xR_3 \simeq A_3$ and $D_x^{-1}R_3 \simeq \overline{R}_3$ are among these, while $D_zD_x^2 \sim D_x^{-2}$ and $D_x^{-2}R_3 \simeq \overline{A}_3$.
\end{proof}

\subsection{Prefix algorithm} Any $g \in \PMod(\mathbb{R}^2,P_{R_3})$ of reduced word length at least 4 in the free generating set $\{D_x, \ D_z \}$ can be subdivided into two pieces $g=hp$ where $h$ is possibly the empty word and $p$ is one of the eight prefixes (or inverted variants thereof) as shown in Table~\ref{tab:prefixes}.

\begin{center}
\begin{table}[h]
\begin{tabular}{||c l l l||} 
 \hline
Case & $g=hp$ & $g'$ & $\Delta=|g'|-|g|$\\ [0.5ex] 
 \hline\hline
1) & $hD_z$ & $D_xh$ & $\Delta \leq 0$ \\ 

2) & $hD_x^3$ & $D_x^{-1}D_z^{-1}h$ & $\Delta \leq -1$ \\ 

3) & $hD_x^{-1}D_zD_x$ & $h$ & $\Delta = -3$ \\ 

4) & $hD_xD_zD_x^{-1}$ & $h$ & $\Delta = -3$ \\ 

5) & $hD_zD_x=hD_xD_x^{-1}D_zD_x$ & $hD_x$ & $\Delta = -1$ \\ 

6) & $hD_xD_zD_xD_x=hD_xD_zD_x^{-1}D_x^3$ & $D_x^{-1}D_z^{-1}h$ & $\Delta \leq -2$ \\

7) & $hD_x^{-1}D_zD_xD_x=hD_xD_x^{-2}D_zD_x^2$ & $hD_x$ & $\Delta \leq -3$ \\ 

8) & $hD_zD_zD_xD_x=hD_x^{-1}D_xD_zD_zD_x^{-1}D_x^3$ & $ D_x^{-1}D_z^{-1}hD_x^{-1}$ & $\Delta \leq -1$\\

 \hline
\end{tabular}
\caption{ The eight cases of prefixes for words in $\PMod(\mathbb{R}^2,P_{R_3})$ of length at least 4 and the effects of lifting these through $R_3$ on reduced word length. }\label{tab:prefixes}
\end{table}
\end{center}

In each case the prefix $p$ may be lifted through $R_3$ (perhaps with borrowing) and then the lift may appended to $h$. This yields a new word $g'$, and by the same logic as \cite[Lemma 5.1]{BLMW} we have that $g'R_3 \simeq gR_3$. In the first four cases $p$ itself is liftable; each is a generator in $\mathcal{L}$. In the sixth case $p$ can be rewritten as a product of two of these generators. In the remaining three cases borrowing is required in order to lift $p$. The change in reduced word length in each case is recorded in Table~\ref{tab:prefixes}; the fact that in some cases we have an inequality for this change comes from the fact that the new word may admit free reduction.

Let $P(g)=g'$ be the prefix lifting map just described, which is well defined on all $g \in \PMod(\mathbb{R}^2,P_{R_3})$ of reduced word length at least 4, as well as for some shorter words. For the remaining reduced words in $g \in \PMod(\mathbb{R}^2,P_{R_3})$ of length at most 3, we define $P(g)=g$. We do this for the sake of making the algorithm as simple as possible, although the trade-off is that there are more terminal words than is necessary. (The issue is that further lifting shows that some of these short words are in the same equivalence class, but this further lifting does not necessarily decrease reduced word length.) We now show that iterating the map $P$ yields an algorithm for determining the equivalence class of $gR_3$.

\begin{theorem}
For any $g \in \PMod(\mathbb{R}^2,P_{R_3})$, there exists a $k \geq 0$ such that $|P^k(g)| < 4$. Moreover, for any $g$, there exists an $k$ such that $P^k(g)$ is one of the following nine values, and the equivalence class of $gR_3$ is determined by the corresponding value in Table~\ref{tab:terminal}.
\end{theorem}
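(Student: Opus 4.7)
My plan is to split the theorem into three pieces: first, that iteration of $P$ drives word length below $4$; second, that the restricted dynamics of $P$ on short words enter one of nine fixed points; and third, that each fixed point identifies the equivalence class of $gR_3$. The first two pieces together give the existence of $k$ for which $P^k(g)$ is one of nine values, and the third reads the equivalence class off from that terminal word.

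For termination, I would introduce the potential
\[
\Phi(g) \;=\; 2|g| + N_z(g),
\]
where $|g|$ is reduced word length in the free basis $\{D_x,D_z\}$ and $N_z(g)$ counts occurrences of $D_z^{\pm 1}$. A case-by-case inspection of Table~\ref{tab:prefixes} would verify that $\Phi$ strictly decreases under every application of $P$ for $|g|\geq 4$. The delicate case is Case~1, in which word length may remain the same; however $N_z$ strictly decreases by one, since a $D_z^{\pm 1}$ is removed and only a $D_x^{\pm 1}$ is introduced, yielding $\Delta\Phi\leq -1$. In Case~2 the length drops by at least one while $N_z$ can rise by at most one, also giving $\Delta\Phi\leq -1$. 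In Cases~3--8 the length strictly decreases by at least one and $N_z$ does not increase, so $\Delta\Phi\leq -2$. Since $\Phi$ takes nonnegative integer values, some iterate $P^k(g)$ must satisfy $|P^k(g)|<4$.

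Once word length drops below $4$, the $P$-orbit of $g$ lies in the finite set of reduced words of length at most $3$. I would enumerate this set and apply $P$ to each word: each is either a fixed point by the defining convention (no prefix pattern applies) or is sent by one of the short-prefix cases of Table~\ref{tab:prefixes} to another short word. A finite computation confirms that every orbit collapses onto one of nine fixed points, namely the entries of Table~\ref{tab:terminal}. For each terminal word $g_0$ I would then compute the equivalence class of $g_0R_3$ using the topological Hubbard tree techniques of Section~\ref{sec:cubic}; several of these are already handled in the base cases for Theorem~\ref{thm:cubicrabbit} (for instance $D_xR_3\simeq A_3$, $D_x^{-1}R_3\simeq \overline{R}_3$, and $D_x^{-2}R_3\simeq \overline{A}_3$), and the remaining terminals require only analogous small tree-and-angle computations.

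The main obstacle is the termination argument for Case~1: because it can leave $|g|$ unchanged, a plain length-based argument does not suffice. The two-parameter potential $\Phi$ is engineered precisely for this, exploiting the fact that Case~1 trades a $D_z^{\pm 1}$ letter for a $D_x^{\pm 1}$ letter so that $N_z$ strictly decreases even when $|g|$ does not. A secondary care-point is that, unlike the whole-word algorithm, $P$ is not a homomorphism, so one cannot directly invoke the nucleus of the wreath recursion from the previous theorem; the identification of the nine attracting fixed points must instead be verified by a finite direct inspection of the short words.
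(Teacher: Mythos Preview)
Your proposal is correct and follows the same three-part structure as the paper's proof: a word-length termination argument, a finite check on words of length at most $3$, and identification of the terminal classes via the base cases of Section~\ref{sec:cubic}. The one substantive difference is your handling of Case~1: the paper argues informally that repeated applications of Case~1 must eventually produce a word whose prefix is a power of $D_x$ (since each application consumes a rightmost $D_z^{\pm 1}$), whereas you package this same observation into the potential $\Phi(g)=2|g|+N_z(g)$, which strictly decreases under every application of $P$. Your formulation is cleaner and makes the termination bound explicit, but it encodes exactly the same mechanism the paper is using.
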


\begin{center}
\begin{table}[!ht]
\begin{tabular}{||r |c |c |c |c |c |c |c |c |c ||} 
 \hline
 \rule{0pt}{18pt}
 $P^k(g)=$ & $id$ & $D_x$ & $D_x^{-1}$ & $D_x^2$ &  $D_x^{-2}$ &  $D_zD_x^2$ & $D_zD_x^{-2}$ &  $D_z^{-1}D_x^2$ & $D_z^{-1}D_x^{-2}$\\
 
 \hline
\rule{0pt}{18pt}$gR_3 \simeq $ & $R_3$ & $A_3$ & $\overline{R}_3$ & $\overline{A}_3$ & $\overline{A}_3$ & $\overline{A}_3$ & $\overline{A}_3$ & $\overline{A}_3$ & $\overline{A}_3$\\
 \hline
\end{tabular}
\caption{The nine terminal words under iterates of $P$ on $\PMod(\mathbb{R}^2,P_{R_3})$ and the corresponding equivalence classes of $gR_3$.}
\label{tab:terminal}
\end{table}
\end{center}

\begin{proof}
If $|g|<4$, the first statement holds trivially. Then let $g \in \PMod(\mathbb{R}^2,P_{R_3})$ be a reduced word of length at least 4. We show that for some $k>0$ we have $|P^k(g)|<|g|$; the first statement will then hold by induction. If the prefix of $g$ falls into one of the cases 2 through 8 in Table~\ref{tab:prefixes}, we have that $\Delta \leq -1$ and so we have the desired decrease in reduced word length with $k=1$. Otherwise, we are in Case 1 and $g=hD_z$ (or, similarly, $g=hD_z^{-1}$). Then $g'=D_xh$ and reduced word length may not have decreased, but it has not increased, and now there is a new prefix. Further applications of $P$ do not increase word length and eventually produce a word that begins with $D_x$ or $D_x^{-1}$. Note that every word $g'$ in Table~\ref{tab:prefixes} begins with either $h$ or $D_x$ or $D_x^{-1}$, and that even in the situation where $g$ is a power of $D_z$, applications of $P$ eventually produce a power of $D_x$. Thus Case 1 reduces to the other cases.

It remains to consider the dynamics of $P$ on words of length at most 3. The only cycles are the $9$ fixed points recorded in Table \ref{tab:terminal} along with the corresponding equivalence classes of $gR_3$. Again, these were either computed as base cases in Section~\ref{sec:cubic} or (in the case of the five words yielding $\overline{A}$) are shown to be equivalent to a base case with a small amount of direct calculation.\end{proof}

\p{Computations} Recall that the group $\PMod(\mathbb{R}^2,P_{R_3})$ is a free group of rank 2. With the generating set $\{ D_x, D_z \}$ it contains $2 \cdot 3^{\ell}-1$ elements of reduced word length at most $\ell$. In Table~\ref{tab:counts} we give the counts of these elements according to their equivalence classes $gR_3$ for small values of $\ell$. We have computed these results using both the whole-word and prefix algorithms. Note that the tallies given in the chart are dependent on the chosen generating set.

\begin{center}
\begin{table}[h!]
\begin{tabular}{||r |r| r |r |r |r |r |r |r |r |r||} 
 \hline
\rule{0pt}{16pt}$\ell$        & $0$ & $1$ & $2$ & $3$ & $4$ &  $5$ &  $6$ & $7$ &  $8$ & $9$\\
 \hline
 \hline
\rule{0pt}{16pt} $R_3$        & $1$ & $1$ & $3$ & $11$ & $27$ &  $94$ &  $287$ & $857$ &  $2527$ & $7341$\\
 \hline
\rule{0pt}{16pt} $\overline{R}_3$  & $0$ & $2$ & $4$ & $8$ & $29$ &  $82$ &  $258$ & $785$ &  $2294$ & $6802$\\
 \hline
\rule{0pt}{16pt}$A_3$       & $0$ & $2$ & $4$ & $12$ & $48$ &  $139$ &  $445$ & $1367$ &  $4078$ & $12495$\\
 \hline
\rule{0pt}{16pt}$\overline{A}_3$ & $0$ & $0$ & $6$ & $22$ & $57$ &  $170$ &  $467$ & $1364$ &  $4222$ & $12727$\\
 \hline
\end{tabular}
\caption{The relative frequency of $R_3$, $\overline{R}_3$, $A_3$ and $\overline{A}_3$ for $gR_3$ with $|g|\leq \ell$ in the generating set $\{D_x,D_z\}$}
\label{tab:counts}
\end{table}
\end{center}

It would be interesting to know whether there is a limiting ratio among the equivalence classes as $\ell$ goes to infinity, and if so, what this ratio is. To our knowledge an exact result along these lines is not known even in the case of the quadratic rabbit. Answers to these questions would represent a significant step forward in the study of twisted polynomial problems. 

\section{Twisting the many-eared cubic rabbit}
\label{sec:cubicn}

We now turn to many-eared cubic rabbits $R_n$, the analogues of the cubic rabbit but where the critical portrait is a cycle of $n$ marked points. We begin by giving a combinatorial description of the maps $R_n$, just as we did for $R_3$. We then describe the families of topological polynomials that arise in Theorem~\ref{thm:many}, the solution of the twisted many-eared cubic rabbit problem. We then solve this problem in two steps, first producing reduction formulas and then determining the base cases.

The Hubbard tree for $R_n$ is depicted in Figure~\ref{fig:R6}. Labeling the edges so that $e_i$ has $p_i$ as one of its endpoints, the induced map of $R_n$ on the edges of its Hubbard tree is:
$$(R_n)_*(e_i) = 
\begin{cases}
e_{i+1} & 0\leq i \leq n-1 \\
\end{cases}$$
with indices taken mod $n$. The complex kneading sequence for the family $R_n$ is $\overline{11\cdots1\ast}$.  
The polynomials in the family $\overline{R}_n$ also have $n$-pods as their Hubbard trees, but $\overline{R}_n$ rotates the edges of these $n$-pods clockwise instead of counterclockwise.  The complex kneading sequence for $\overline{R}_n$ is also $\overline{11\cdots 1\ast}$.

We now give a combinatorial topology description of a map that is homotopic to $R_n$ relative to $P_{R_n}$, and so also equivalent to $R_n$. For the post-critical set $P_{R_n}$, let $0=p_0$ be the origin and let the points $p_i$ be $R_n^i(0)$ for $1 \leq i \leq n-1$. Let $\Delta_n$ denote the solid $n$-gon with vertex set $P_{R_n}$. Let $\Cub$ be any triple branched cover $(\R^2,P_{R_n})\to(\R^2,P_{R_n})$ that is only branched over 0 and fixes pointwise $\Delta_n$.  Let $\Rot_n$ be a homeomorphism of $(\R^2,P_{R_n})$ that rotates the points $P_{R_n}$ counterclockwise, with each $p_i$ mapping to $p_{i+1}$, and that preserves $\Delta_n$ setwise. This homeomorphism is unique up to homotopy relative to $P_{R_n}$. Then $\Rot_n \circ \Cub$ is homotopic to $R_n$, for each value of $n$, as follows.  The $n$-pod contained in $\Delta_n$ is invariant under lifting by $\Rot_n\circ \Cub$ and $\Rot_n\circ \Cub$ permutes the edges counterclockwise.  Moreover, the $n$-pod satisfies Poirier's conditions to be the topological Hubbard tree for $\Rot_n\circ \Cub$. Since the Hubbard tree for $R_n$ is a homotopic $n$-pod relative to $P_{R_n}$ and $R_n$ permutes the edges counterclockwise, $R_n$ and $\Rot_n\circ\Cub$ are homotopic. This is again an application of the Alexander method.

For any fixed $n$, let the curve $c_i$ be the boundary of a regular neighborhood of the straight arc from $p_{i-1}$ to $p_i$ with indices taken mod $n$. Set $x=c_2$, $y=c_1$, and $z=c_0$. These curves are also depicted in Figure~\ref{fig:R6}.

\begin{figure}
    \centering
    \includegraphics[width=.4\textwidth]{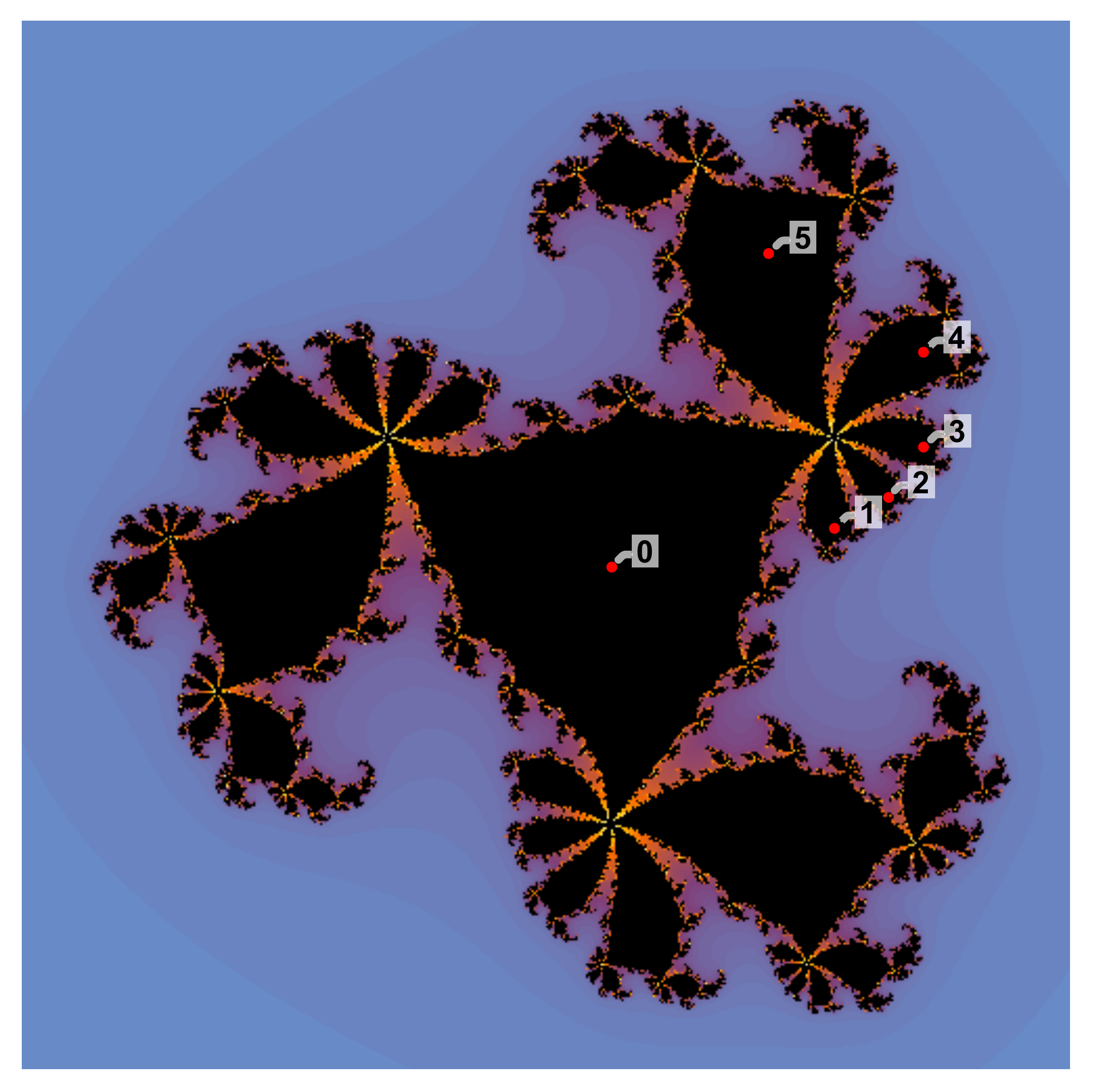}\qquad \qquad \ \ 
    \labellist\small\hair 2.5pt
        \pinlabel {$p_0$} by 0 0 at 0 0
        \pinlabel {$p_1$} by 0 0 at 130 10
         \pinlabel {$p_2$} by 0 0 at 174 33
           \pinlabel {$p_3$} by 0 0 at 195 70
            \pinlabel {$p_4$} by 0 0 at 190 125
             \pinlabel {$p_{n-1}$} by 0 0 at 88 187
        \pinlabel {$y=c_1$} by 0 0 at 70 -5
        \pinlabel {$x=c_2$} by 0 0 at 162 18
        \pinlabel {$c_3$} by 0 0 at 190 47
        \pinlabel {$c_4$} by 0 0 at 200 95
         \pinlabel {$z=c_0$} by 0 0 at 7 95
        \pinlabel {$e_0$} by 0 0 at 92 50
        \pinlabel {$e_1$} by 0 0 at 134 55
        \pinlabel {$e_2$} by 0 0 at 148 65
        \pinlabel {$e_3$} by 0 0 at 155 82
        \pinlabel {$e_4$} by 0 0 at 148 110
         \pinlabel {$e_{n-1}$} by 0 0 at 105 100
        \endlabellist
    \includegraphics[width=.35\textwidth]{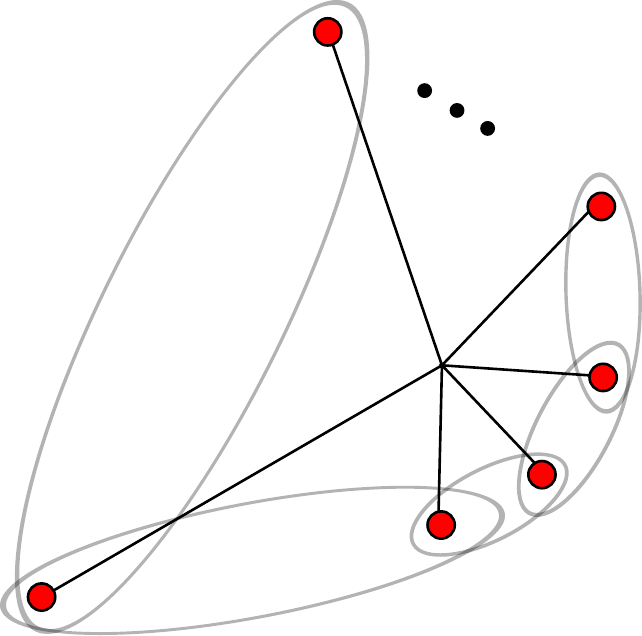}
    \caption{The Julia set for $R_6$. The Hubbard tree and the curves $c_i$ for $R_n$.}
    \label{fig:R6}
\end{figure}

\subsection{The required polynomial families} We now describe the polynomials that appear in the statement of Theorem~\ref{thm:many}. These polynomials fall in nine families, three of which are in complex conjugate pairs. That is, one family in the pair is a sequence of polynomials $z^3+c_n$ while the other is a sequence of polynomials $z^3+\overline{c_n}$. While the corresponding maps in the families are not equivalent, they are conjugate via the orientation-reversing homeomorphism of complex conjugation. Therefore it suffices to give descriptions in detail of six families.  For each of the families, we provide multiple pieces of identifying information: the kneading sequence, the Hubbard tree and the dynamical map on the edges, the angle at the critical point, and the direction of rotation of the post-critical set. This data is redundant, but we hope that readers with different mathematical tastes will find their preferred descriptions. Note that the Hubbard trees are depicted schematically. The complex kneading sequence, defined by Hubbard--Schleicher \cite{spider}, is in the cubic case a string of digits in the set $\{0,1,2,\ast\}$ that describes the {\it itinerary} of the post-critical points relative to the unit circle. (See Kauko \cite{kauko} for a definition of higher-degree kneading sequences and details of the convention we use). In what follows, we we name the critical point $p_0$ and $p_i=f^i(p_0)$ for $1\leq i\leq n-1$.

Five of the families of polynomials that arise in our solution are cubic analogues of families that appeared in the quadratic twisted many-eared rabbit problem.  The first of the families is $R_n$ and we will call the other four $A_n$, $B_n$, $K_{n,1}$ and $K_{n,2}$, as in Belk--Lanier--Margalit--Winarski \cite[Section 5]{BLMW}. The maps $K_{n,1}$ and $K_{n,2}$ are both generalizations of the quadratic Kokopelli family $K_n$.  As in the quadratic case, the Hubbard trees for the families $A_n$ and $B_n$ are paths of length $n-1$.  Unlike the quadratic case, the families $A_n$ and $B_n$ do not have real coefficients; note that their Hubbard trees do not lie in the real line and their post-critical sets do not lie on a line, although we depict them in this way. 

\begin{figure}
    \centering
    \subcaptionbox{The Julia set for $A_6$ and the Hubbard tree for $A_n$\label{fig:A6}}{$\vcenter{\hbox{    \includegraphics[width=.3\textwidth]{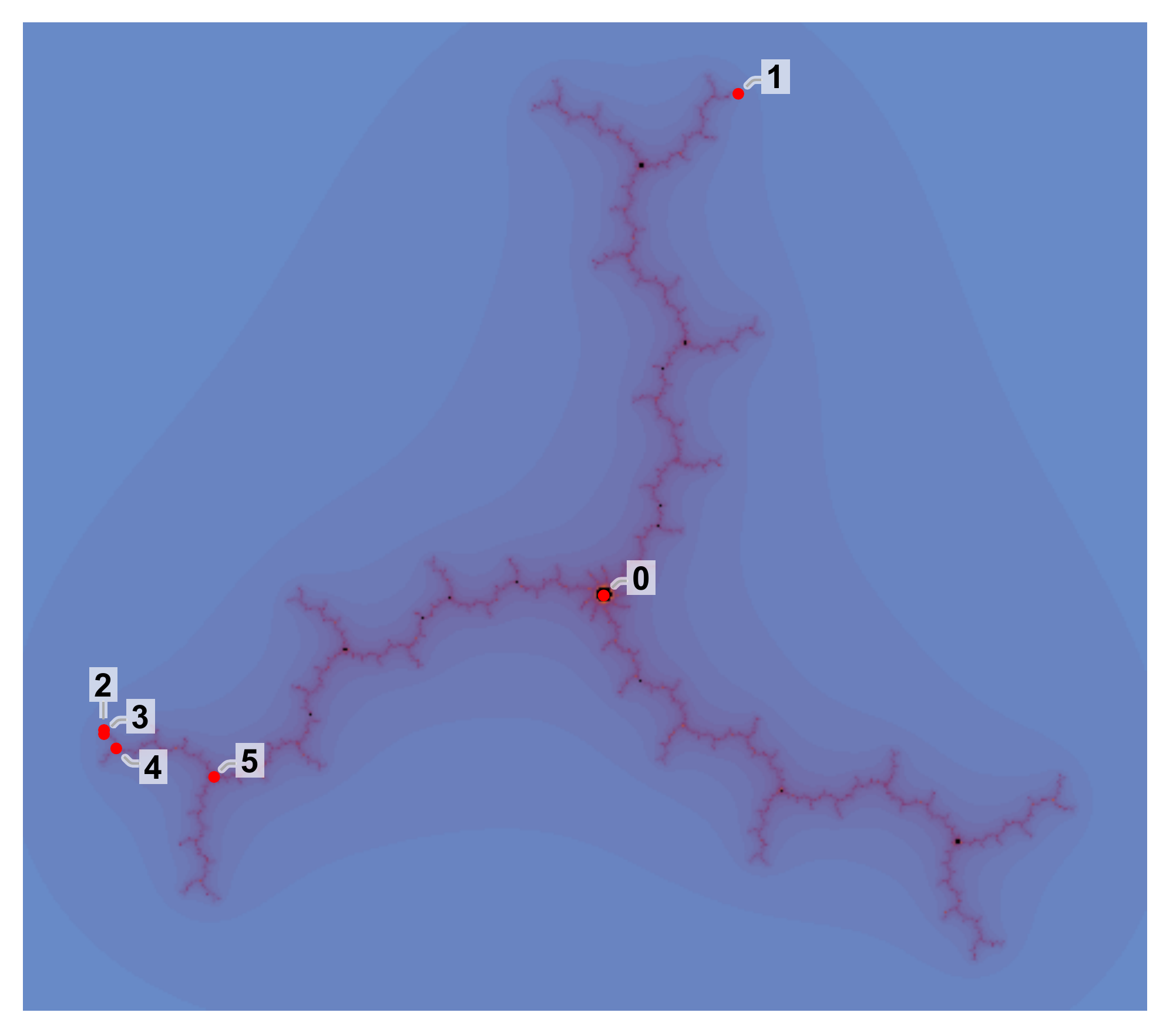}}}$\ \ \ \ \

  $\vcenter{\hbox{{\labellist\small\hair 2.5pt
        \pinlabel {$p_1$} by 0 1 at 0 0
        \pinlabel {$p_0$} by 0 1 at 60 0
         \pinlabel {$p_{n-1}$} by 0 1 at 120 0
          \pinlabel {$p_{n-2}$} by 0 1 at 177 0
           \pinlabel {$p_2$} by 0 1 at 260 0
              \pinlabel {$e_1$} by 0 -1.5 at 30 0
          \pinlabel {$e_2$} by 0 -1.5 at 87 0
           \pinlabel {$e_3$} by 0 -1.5 at 140 0
        \endlabellist\includegraphics[width=.3\textwidth]{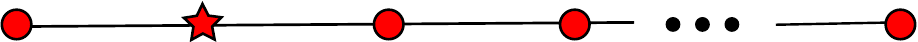}}}}$}
    
   \subcaptionbox{The Julia set for $B_6$ and the Hubbard tree for $B_n$\label{fig:B6}}{    $\vcenter{\hbox{\includegraphics[width=.3\textwidth]{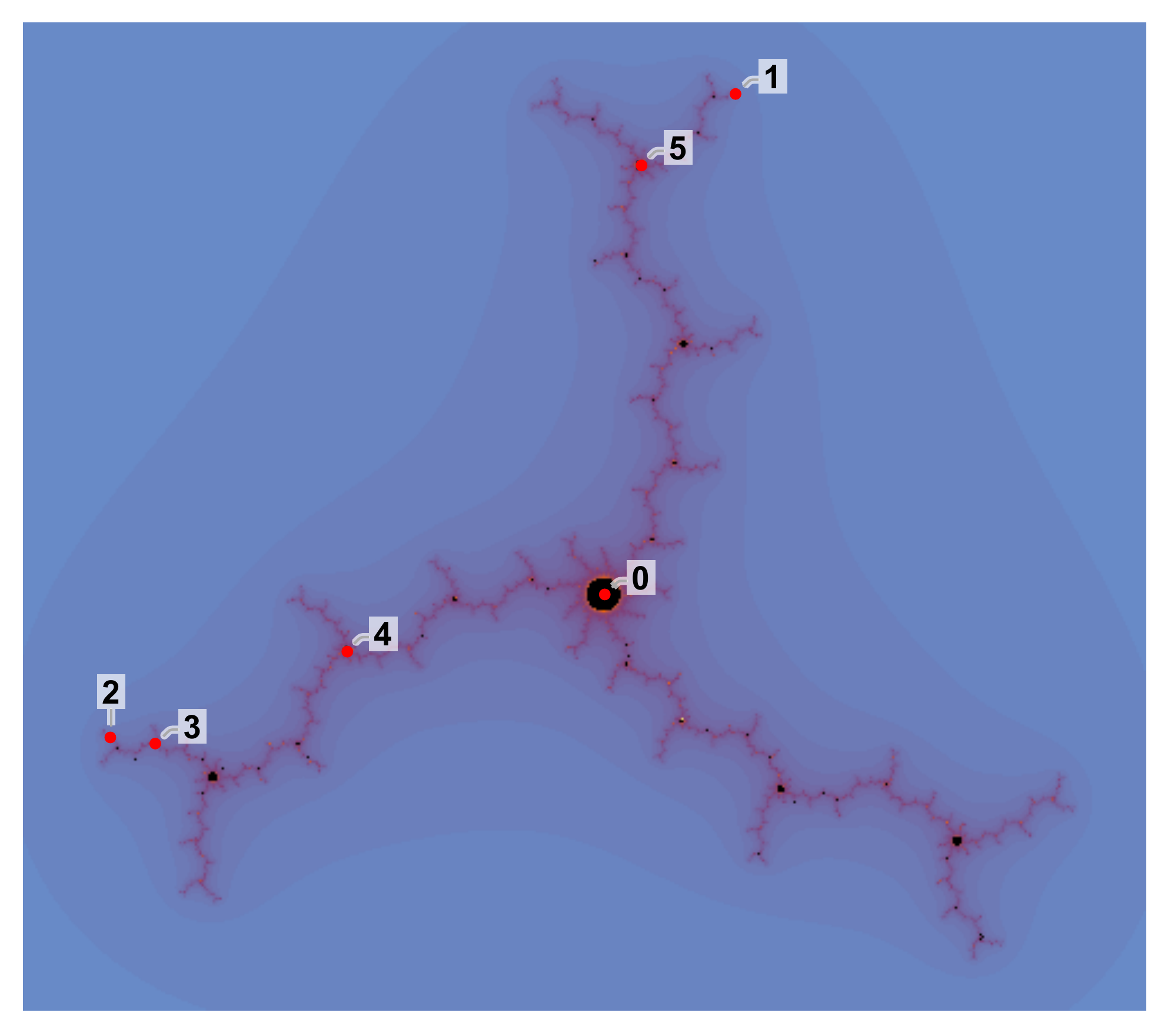}}}$\ \ \ \ 
    $\vcenter{\hbox{\labellist\small\hair 2.5pt       
    \pinlabel {$p_1$} by 0 1 at 0 0
        \pinlabel {$p_{n-1}$} by 0 1 at 60 0
         \pinlabel {$p_0$} by 0 1 at 120 0
          \pinlabel {$p_{n-2}$} by 0 1 at 177 0
           \pinlabel {$p_2$} by 0 1 at 260 0
              \pinlabel {$e_1$} by 0 -1.5 at 30 0
          \pinlabel {$e_2$} by 0 -1.5 at 87 0
           \pinlabel {$e_3$} by 0 -1.5 at 140 0
        \endlabellist\includegraphics[width=.3\textwidth]{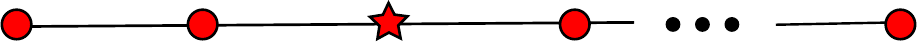}}}$}

    \subcaptionbox{The Julia set for $K_{6,1}$.  The Hubbard tree for $K_{n,1}$ and $K_{n,2}$. The Julia set for $K_{6,2}$.\label{fig:K6}}{    $\vcenter{\hbox{\includegraphics[width=.3\textwidth]{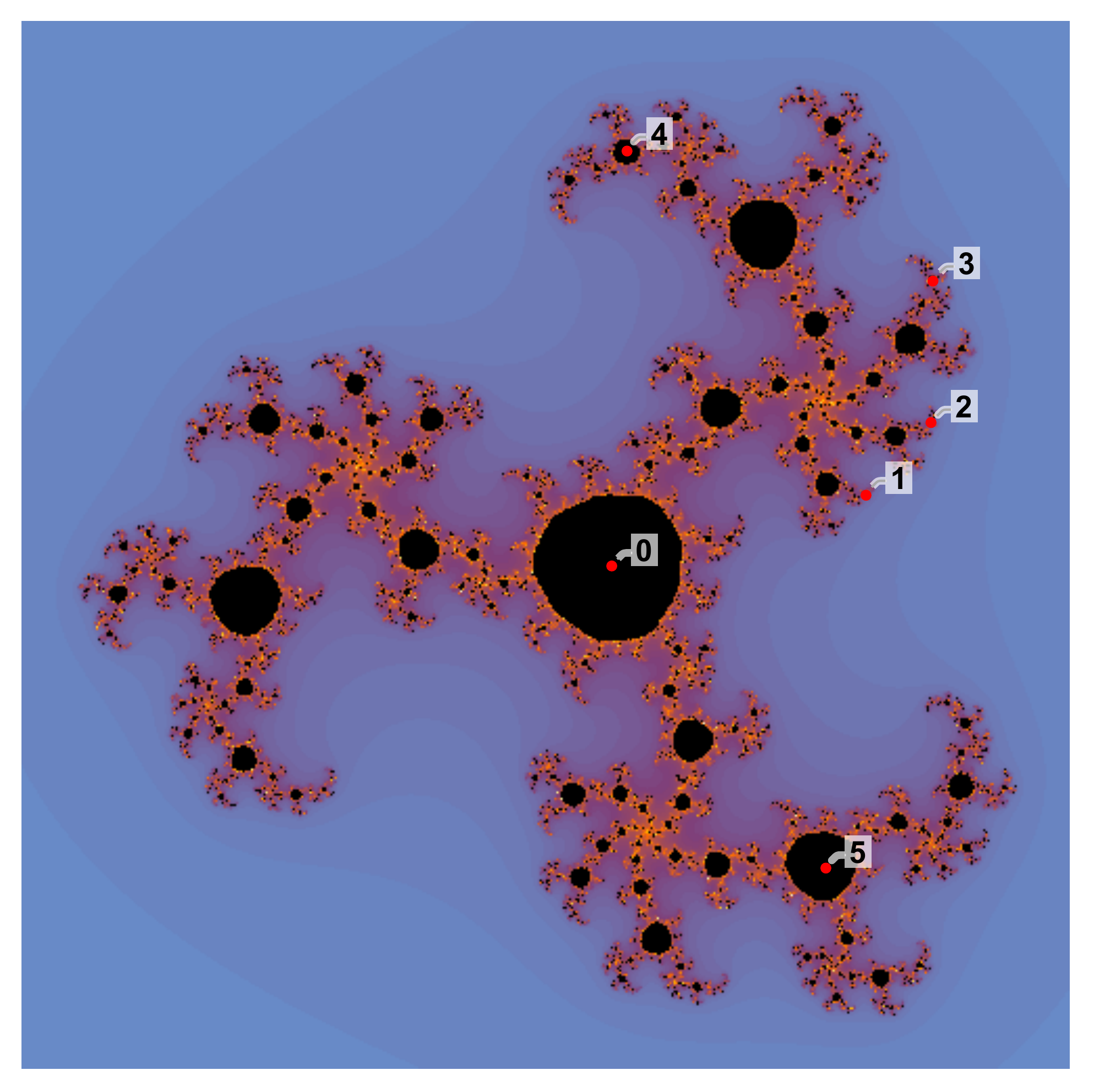}}}$\qquad 
    $\vcenter{\hbox{\labellist\small\hair 2.5pt       
    \pinlabel {$p_{n-1}$} by 0 1 at 0 0
        \pinlabel {$p_0$} by 0 0 at 100 67
         \pinlabel {$p_1$} by 0 0 at 240 100
          \pinlabel {$p_2$} by 0 0 at 285 150
           \pinlabel {$p_3$} by 0 1 at 290 250
        \pinlabel {$p_{n-2}$} by 0 0 at 170 275
              \pinlabel {$e_1$} by 0 0 at 50 25
          \pinlabel {$e_2$} by 0 0 at 167 120
           \pinlabel {$e_3$} by 0 0 at 210 140
             \pinlabel {$e_4$} by 0 0 at 240 157
            \pinlabel {$e_5$} by 0 0 at 250 195
        \pinlabel {$e_n$} by 0 0 at 202 220
        \endlabellist\includegraphics[width=.27\textwidth]{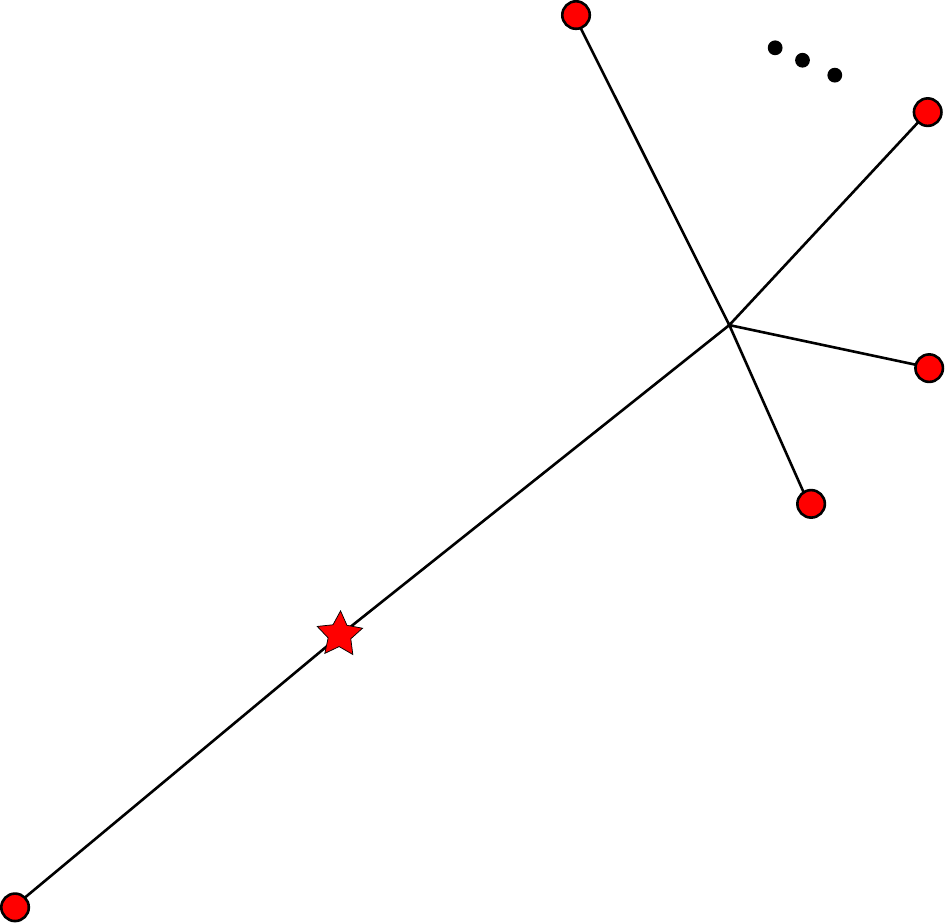}}}$\qquad
      $\vcenter{\hbox{\includegraphics[width=.3\textwidth]{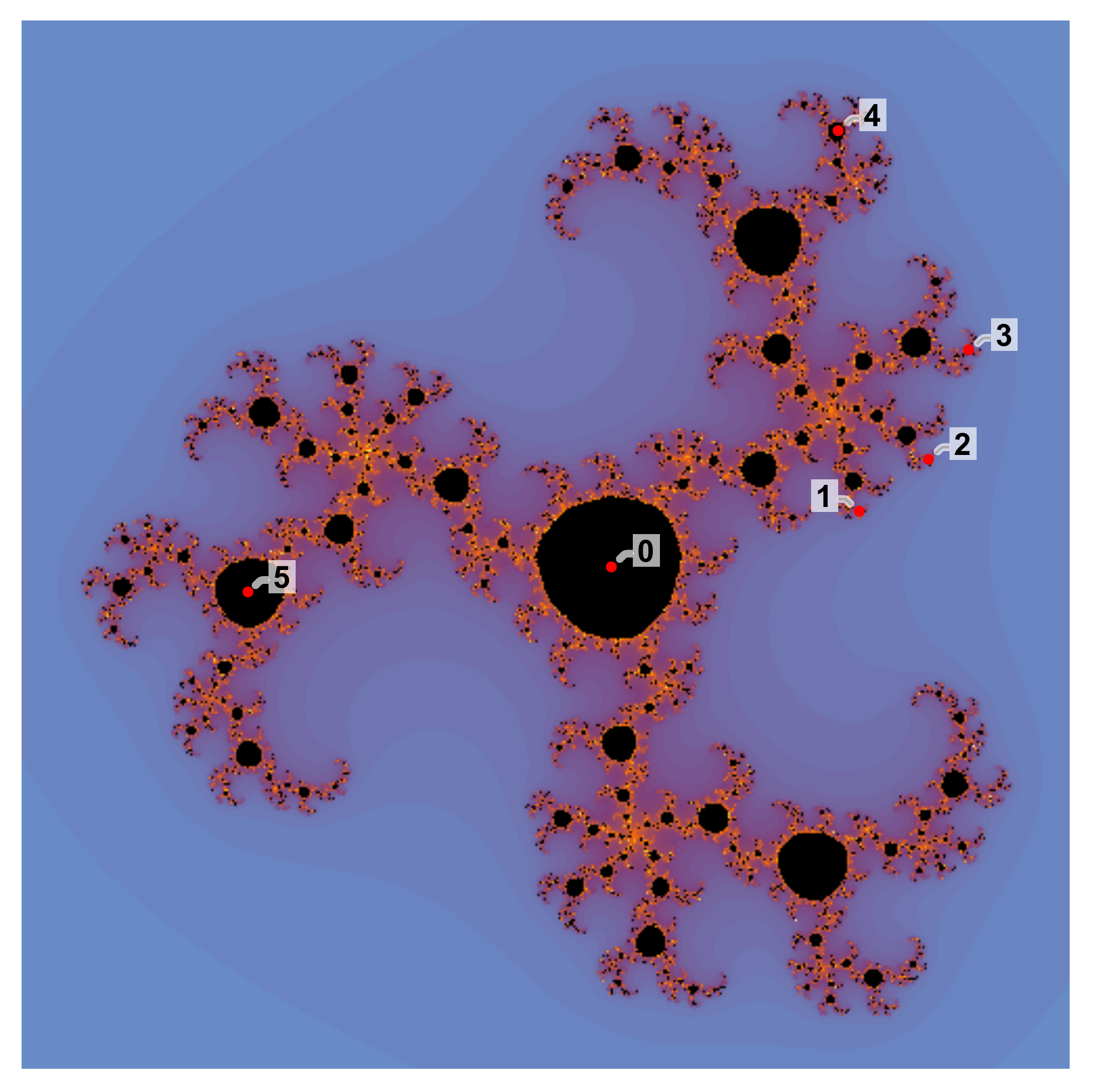}}}$ 
      }
      
      \subcaptionbox{The Julia set for $Y_6$ and the Hubbard tree for $Y_n$\label{fig:Y6}}{   \includegraphics[width=.3\textwidth]{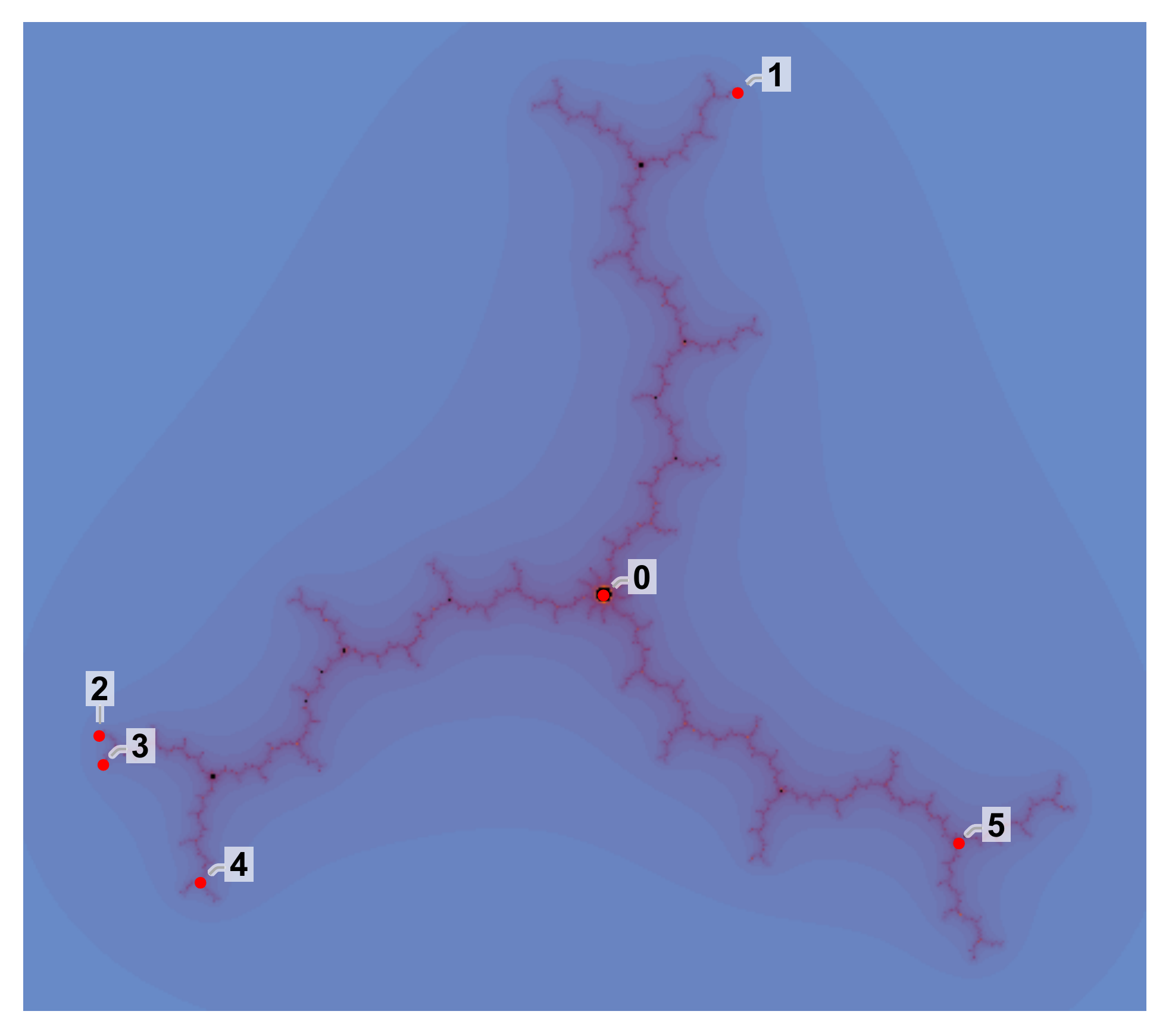}\qquad\qquad
    \labellist\small\hair 2.5pt       
    \pinlabel {$p_{n-1}$} by 0 0 at 320 60
        \pinlabel {$p_0$} by 0 0 at 220 100
         \pinlabel {$p_1$} by 0 0 at 290 240
          \pinlabel {$p_2$} by 1 0 at 0 95
           \pinlabel {$p_{3}$} by 0 0 at 20 45
        \pinlabel {$p_{n-2}$} by 0 0 at 105 0
          \pinlabel {$e_{2n-6}$} by 0 0 at 167 124
           \pinlabel {$e_2$} by 0 0 at 30 104
             \pinlabel {$e_{2n-5}$} by 0 0 at 280 110
            \pinlabel {$e_1$} by 0 0 at 265 195
        \pinlabel {$e_{2n-7}$} by 0 0 at 135 53
              \pinlabel {$e_{2n-8}$} by 0 -.5 at 120 100
              \pinlabel {$e_4$} by 0 0 at 70 104
        \pinlabel {$e_3$} by 0 0 at 55 65
        \endlabellist\includegraphics[width=.35\textwidth]{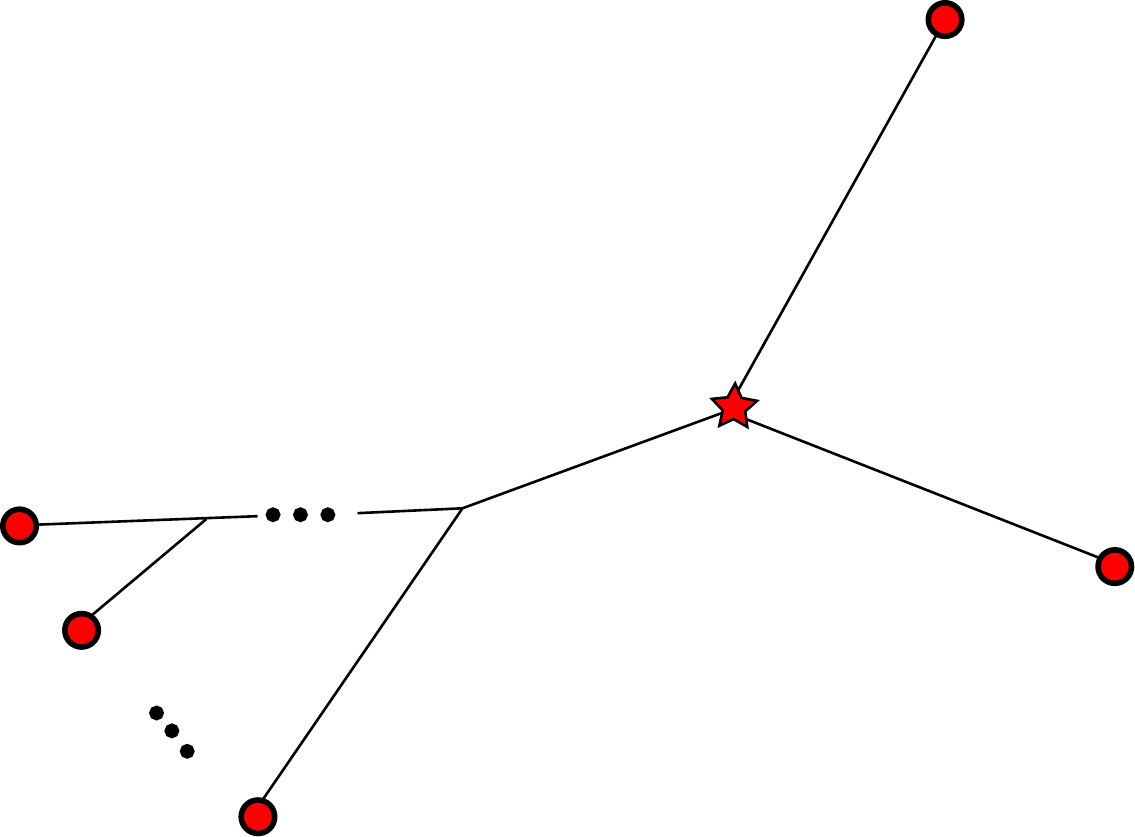}}
    \caption{The Julia sets for $A_6,B_6, K_{6,1},K_{6,2}$ and $Y_6$ and the Hubbard trees for the corresponding families.}
    \label{fig:ngen}
\end{figure}
\p{The family $A_n$} 
After $R_n$, the second family of polynomials is $A_n$. The Hubbard tree for $A_n$ is depicted in Figure~\ref{fig:A6} and the induced map on the edges is:
$$(A_n)_*(e_i) = 
\begin{cases}
e_1e_2\cdots e_{n-1} & i=1 \\
e_{i-1}& 2\leq i\leq n-1
\end{cases}.$$
The complex kneading sequence for the family $A_n$ is $\overline{122\cdots2\ast}$.  There is a homeomorphism between the Hubbard tree for $A_n$ and a subset of the real line such that  $p_1$ is to the left of $p_0$ and $p_2,\cdots, p_{n-1}$ are to the right of $p_0$.  As in the 2-eared cubic rabbit case, let $e_1$ be the edge between $p_0$ and $p_1$, and let $e_2$ be the edge between $p_0$ and $p_{n-1}$.  The angle in $A_n$ measured counterclockwise from $e_1$ to $e_2$ is $2\pi/3$.  This is indicated in the kneading sequence because the second digit is one more than the first, which encodes the fact that $p_2$ is one sector counterclockwise from $p_1$. 
The polynomial $\overline{A}_n$ has the same Hubbard tree as $A_n$, but the invariant angle between $e_1$ and $e_2$ (measured counterclockwise) is $4\pi/3$.  The complex kneading sequence for $\overline{A}_n$ is $\overline{10\cdots 0\ast}$.

\p{The family $B_n$} The third family is $B_n$. The Hubbard tree for $B_n$ is also a path of length $n-1$, depicted in Figure~\ref{fig:B6}.  The induced map on the edges is given by:
$$(B_n)_*(e_i) = 
\begin{cases}
e_3\cdots e_{n-1}\hspace*{2ex} & i=1 \\
e_1e_2& i=2\\
e_1 & i=3 \\
e_2e_3 & i=4 \\
e_{i-1} & 5\leq i\leq n-1
\end{cases}.$$
The cubic complex kneading sequence of $B_n$ is $\overline{12\cdots 21\ast}$ (the intermediate digits are all 2).  There is a homeomorphism between the Hubbard tree for $B_n$ and subset of the real line such that $p_1$ and $p_{n-1}$ are to the left of $p_0$ and all the other post-critical points are to the right of $p_0$.  We name the edges $e_1,\cdots, e_{n-1}$ from left to right in this embedding.  Then $e_2$ and $e_3$ are the edges that meet at the critical point.  The (counterclockwise) angle between $e_2$ and $e_3$ has measure $2\pi/3$.  This is indicated in the kneading sequence because the $n-1$st digit is one less than the $n-2$nd, which encodes the fact that $p_{n-2}$ is one sector counterclockwise from $p_{n-1}$ (and $p_{n-1}$ and $p_{n-2}$ are the endpoints of the edges $e_1$ and $e_2$ respectively).  The polynomial $\overline{B}_n$ has the same Hubbard tree as $B_n$ but the invariant (counterclockwise) angle between $e_2$ and $e_3$ has measure $4\pi/3$.  The complex kneading sequence for $\overline{B}_n$ is $\overline{10\cdots 0 1\ast}$.

\p{The families $K_{n,1}$ and $K_{n,2}$} The fourth and fifth families $K_{n,1}$ and $K_{n,2}$ have the same Hubbard tree and differ only by their invariant angle assignments. Unlike in the families $A_n$ and $B_n$, this difference cannot be accounted for with complex conjugation. (As a consequence, we are in effect defining four families: $K_{n,1}$, $\overline{K}_{n,1}$, $K_{n,2}$, and $\overline{K}_{n,2}$.) The Hubbard tree for $K_{n,1}$ and $K_{n,2}$ is depicted in Figure \ref{fig:K6}; we label $e_1$ as the edge between the $p_0$ and $p_{n-1}$.  
For $j\in\{1,2\}$, the induced map on the edges of the Hubbard tree is given by:
\begin{align*}(K_{n,j})_*(e_i) &= 
\begin{cases}
e_2e_3\hspace*{7.9ex}  & i=1 \\
e_{i+1}& 2\leq i\leq n-1\\
e_1e_2 & i=n
\end{cases}.\end{align*}

 The polynomial $K_{n,1}$ has cubic complex kneading sequence $\overline{1\cdots 1 0\ast}$.  The (counterclockwise) angle between $e_1$ and $e_2$ has measure $2\pi/3$.  This is indicated in the kneading sequence because the first digit is one more than the $(n-1)$st digit, which encodes the fact that $p_1$ is one sector counterclockwise from $p_{n-1}$ (and $p_{n-1}$ and $p_{1}$ are the endpoints of $e_1$ and $e_2$, respectively).  
 
 The polynomial $K_{n,2}$ has cubic complex kneading sequence $\overline{1\cdots 1 2\ast}$.  The (counterclockwise) angle between $e_1$ and $e_2$ has measure $4\pi/3$.  This is indicated in the kneading sequence because the first digit is two more than the $(n-1)$st digit, which encodes the fact that $p_1$ is two sectors counterclockwise from $p_{n-1}$ (and $p_{n-1}$ and $p_{1}$ are the endpoints of $e_1$ and $e_2$, respectively).
 
 Both polynomials $K_{n,1}$ and $K_{n,2}$ map the post-critical set counterclockwise relative to the critical point.
 
 While the polynomials $\overline{K}_{n,1}$ and $\overline{K}_{n,2}$ do not arise in the answer to the cubic twisted many-eared rabbit problem, we observe that both polynomials rotate the post-critical set clockwise with respect to the critical point.  The kneading sequence for $\overline{K}_{n,1}$ is $\overline{1\cdots 1 2\ast}$ and the kneading sequence for $\overline{K}_{n,2}$ is $\overline{1\cdots 1 0\ast}$.

\p{The family $Y_n$} Unlike the polynomials we have seen so far, the sixth and final family has a trivalent critical point.  This is not possible for quadratic polynomials or polynomials with three post-critical points, so it does not have a direct analogue to any family of polynomials that appear in previously considered twisting problems.  We call this family $Y_n$ to reflect the trivalent critical point. 
The Hubbard tree for $Y_6$ is depicted in Figure \ref{fig:Y6}. 
The induced map on the edges of the Hubbard tree is given by: 
$$(Y_n)_*(e_i) = 
\begin{cases}
e_2e_4\cdots e_{2n-6}e_1 & i=1 \\
e_3e_4 & i=2\\
e_{i+2} & 3 \leq i \leq 2n-7\\
e_1 & i = 2n-6 \\
e_1 & i = 2n-5 \\
\end{cases}.$$

The polynomial $Y_n$ has cubic complex kneading sequence $\overline{120\cdots 0\ast}$.  The critical point is valence 3 and the angle between each pair of edges that are adjacent in the cyclic order around the critical point is $2\pi/3$.  The map $Y_n$ permutes the vertices of $Y_n$ counterclockwise with respect to the critical point.  This is coarsely seen in the kneading sequence because the second digit is one more than the first and the $(n-1)$st digit is one more than the $(n-2)$nd, which encodes the fact that $p_2$ (and $p_3,\cdots,p_{n-2}$) is one sector counterclockwise from $p_1$ and $p_{n-1}$ is one sector counterclockwise from $p_{n-2}$. 

On the other hand, the polynomial $\overline{Y}_n$ has complex kneading sequence $\overline{10\cdots 02\ast}$.  The map $\overline{Y}_n$ permutes the vertices of $\overline{Y}_n$ counterclockwise with respect to the critical point.

\subsection{Reduction formulas}As in Section \ref{sec:cubic} (and Bartholdi--Nekrashevych \cite{BaNe}), the first step in solving the many-eared twisted cubic rabbit problem is to compute reduction formulas.  

In what follows, we assume $n \geq 4$.  The reduction formulas are as follows.

\label{n_reduction}
Let $m \in \Z$.  Then 
\[
D_x^{m} R_n\simeq 
\begin{cases}
D_x^k R_n    & m=9k\\
D_x R_n      & m=9k+1\\
D_x^2 R_n    & m=9k+2\\
D_y R_n      & m=9k+3\\
D_y D_x R_n    & m=9k+4\\
D_y^{-1}D_x^{-1} R_n      & m=9k+5\\
D_y^2 R_n    & m=9k+6\\
D_x^{-2} R_n & m=9k+7.\\
D_x^{-1} R_n    & m=9k+8\\
\end{cases}.
\]

The reduction formulas for the twisted many-eared cubic rabbit problem are similar to the reduction formulas for the twisted cubic rabbit problem from the previous section. Some of the calculations are in fact identical. The only cases that are different are when $m$ is $9k+5, 9k+6,$ or $9k+8$.  The difference in these cases is that when $n=3$, we have the lantern relation $D_z=D_y^{-1}D_x^{-1}$.  The same relation does not hold when $n\geq4$, which affects the calculation in two different ways.  The first difference affects the base cases when $m=9k+6$. If $n\geq 4$ and $m=9k+6$, then $D_x^mR_n$ is equivalent to $D_y^{-1}D_x^{-1}R_n$.  When $n=3$ and $m=9k+6$, the same sequence of steps hold, but the lantern relation gives us that $D_y^{-1}D_x^{-1}R_n=D_zR_n$, which is equivalent to $D_xR_n$, hence yielding a different base case.
The second difference arises when $m=9k+5$ and $m=9k+8$.  The calculations for the reduction formulas when $m=9k+5$ involve lifting the curve $D_y^{-1}D_x^{-1}(z)$ and the calculations for the reduction formulas when $m=9k+8$ involve lifting the curve $D_xD_y(z)$.  When $n\geq 4$, the curves $D_y^{-1}D_x^{-1}(z)$ and $D_xD_y(z)$ both lift trivially, which allows these cases to reduce to a base case.  On the other hand, when $n=3$, the curves $D_y^{-1}D_x^{-1}(z)$ and $D_xD_y(z)$ are both equal to $z$ (by the lantern relation), which results in the reduction formula $D_x^k$.

Before beginning our justifications of the reduction formulas, we observe several preliminary facts. For all $i \neq 1$ or $2$, we have $D_{c_i} \leadsto D_{c_{i-1}}$; in particular we have $D_{c_3}\leadsto D_{c_2}=D_x$.  As in the case of $R_3$, for all $n\geq 3$, the straight line arc $b$ from the critical value $p_1$ to infinity is a special branch cut for $R_n$.  We again have that $D_{D_y^k(z)} \leadsto \textrm{id}$ by Lemma \ref{lem:triviality2.0}. Unlike the case where $n=3$, when $n\geq 4$ the curves $x$ and $z$ are disjoint and so the Dehn twists $D_z$ and $D_x$ commute; therefore $D_{D_x^{k}(z)}=D_z \leadsto D_{c_{n-1}}$. On the other hand, $D_{D_x^k(c_3)} \leadsto \textrm{id}$ when $k\not\equiv 0\mod 3$ by Lemma \ref{lem:triviality2.0}. 

\bigskip

Here are the calculations justifying the reduction formulas:

\bigskip

\noindent \emph{Case 1: $m=9k$.} We have
$D_x^{9k}=(D_x^3)^{3k} \ 
\leadsto \ 
D_y^{3k} = (D_y^3)^k \ 
\leadsto \ 
D_z^k \ 
\leadsto \ 
\dots \
\leadsto
D_x^k.$

\bigskip

 \noindent\emph{Case 2: $m=9k+1$.}  As above, we use the fact that $D^3_{D_x^{-1}(y)} \leadsto D_z$. We have:
 
$D_x^{9k+1} \ 
\stackrel{D_x}{\leadsto} \ 
D_y^{3k}D_x \ 
\stackrel{D_x}{\leadsto} \ 
D_z^k D_x \ 
\stackrel{D_x}{\leadsto} \ 
D_{c_{n-1}}^kD_x \
\stackrel{D_x}{\leadsto}
\dots
\stackrel{D_x}{\leadsto}
D_{c_3}^kD_x
\stackrel{D_x}{\leadsto}
D_x.
$

\bigskip

\noindent\emph{Case 3: $m=9k+2$.} As above, we use the fact that the preimage of the curve $D_x^{-2}(y)$ has a single component, which is isotopic to $z$.  We also use that $D_x$ commutes with $D_{c_i}$ for $i \neq 1$ or $3$.  We have:

$D_x^{9k+2} \ 
\stackrel{D_x^{2}}\leadsto \  D_y^{3k}D_x^2 \ 
\stackrel{D_x^2}{\leadsto} \ 
D_z^k D_x^2 \ 
\stackrel{D_x^2}{\leadsto} \ 
\dots
\stackrel{D_x^2}{\leadsto} \ 
D_{c_3}^k D_x^2 \ 
\stackrel{D_x^2}{\leadsto} \ 
D_x^2.$

\bigskip

\noindent\emph{Case 4: $m=9k+3$.}  We have
$D_x^{9k+3} \ 
{\leadsto} \ 
D_y^{3k+1} \ 
\stackrel{D_y}\leadsto \ 
D_z^k D_y \ 
\stackrel{D_y}\leadsto \ 
D_y.$

\bigskip

\noindent\emph{Case 5: $m=9k+4$.} We have 
$D_x^{9k+4} \ 
\stackrel{D_x}{\leadsto} \
D_y^{3k+1}D_x \ 
\stackrel{D_yD_x}{\leadsto} \
D_z^{k}D_yD_x
\stackrel{D_yD_x}{\leadsto} \
D_yD_x.
$

\bigskip

\noindent\emph{Case 6: $m=9k+5$.} As above, we use two additional facts.  First, the preimage of $D_x(y)$ is a single component that is homotopic to $D_y(z)$.  Second, the curve $D_xD_y^2(z)$ has algebraic intersection 1 with $b$, therefore the preimage of $D_xD_y^2(z)$ is trivial by Lemma~\ref{lem:triviality2.0}.  We have
\begin{align*}
D_x^{9k+5} \ 
\stackrel{D_x^{-1}}{\leadsto} \ 
D_y^{3k+2}D_x^{-1} \  \stackrel{D_y^{-1}D_x^{-1}}{\leadsto} \ 
\psi(D_x D_y^{3k+3} D_x^{-1})D_y^{-1}D_x^{-1}=
(D_y D_z^{k+1} D_y^{-1}) D_y^{-1}D_x^{-1}
\stackrel{D_y^{-1}D_x^{-1}}\leadsto
\end{align*}
\begin{align*}
\psi(D_xD_y^2D_z^{k+1}D_y^{-2}D_x^{-1})D_y^{-1}D_x^{-1}=D_y^{-1}D_x^{-1}.
\end{align*}

\bigskip

\noindent\emph{Case 7: $m=9k+6$.} We have $
D_x^{9k+6} \
{\leadsto} \ 
D_y^{3k+2} \ 
\stackrel{D_y^2}\leadsto \ 
D_z^k D_y^2 \ 
\stackrel{D_y^2}\leadsto \ 
D_y^2.
$

\bigskip

\noindent\emph{Case 8: $m=9k+7$.} In this case we use the fact that the preimage of $D_x^2(y)$ consists of single connected component, which is homotopic to $D_{x}^{-1}(z)$: $$
D_x^{9k+7} \ 
\stackrel{D_x^{-2}}{\leadsto} \ 
D_y^{3k+3}D_x^{-2} \ 
\stackrel{D_x^{-2}}\leadsto \ 
D_yD_z^{k+1}D_y^{-1}D_x^{-2} \ 
\stackrel{D_x^{-2}}\leadsto \ D_x^{-2}.
$$

\bigskip

\noindent\emph{Case 9: $m=9k+8$.} In this case we use the fact that the preimage of the curve $D_x(y)$ consists of single connected component, which is homotopic to $D_y(z)$. We also use the fact that $D_yD_zD_y^{-1}$ lifts to the identity by Lemma~\ref{lem:triviality2.0}.  We have: $$
D_x^{9k+8} \ 
\stackrel{D_x^{-1}}{\leadsto} \
D_y^{3k+3} D_x^{-1} \ 
\stackrel{D_x^{-1}}\leadsto \ 
D_yD_z^{k+1}D_y^{-1} D_x^{-1} \ 
\stackrel{D_x^{-1}}\leadsto \ 
D_x^{-1}.
$$

\subsection{Base cases}To complete our solution to the twisted many-eared rabbit problem, it remains to determine the base cases.    

\begin{figure}
    \centering
   $\underset{\textstyle\text{(a) $D_xR_n$ and $D_x^2R_n$}}{\labellist\small\hair 2.5pt       
    \pinlabel {$p_0$} by 0 0 at -2 5
        \pinlabel {$p_1$} by 0 0 at 125 15
         \pinlabel {$p_2$} by 0 0 at 190 125
          \pinlabel {$p_{n-1}$} by 0 0 at 100 190
           \pinlabel {$e_1$} by 0 0 at 60 30
           \pinlabel {$e_2$} by 0 0 at 45 100
            \pinlabel {$e_{n-1}$} by 0 1 at 140 130
        \endlabellist\includegraphics[width=.2\textwidth]{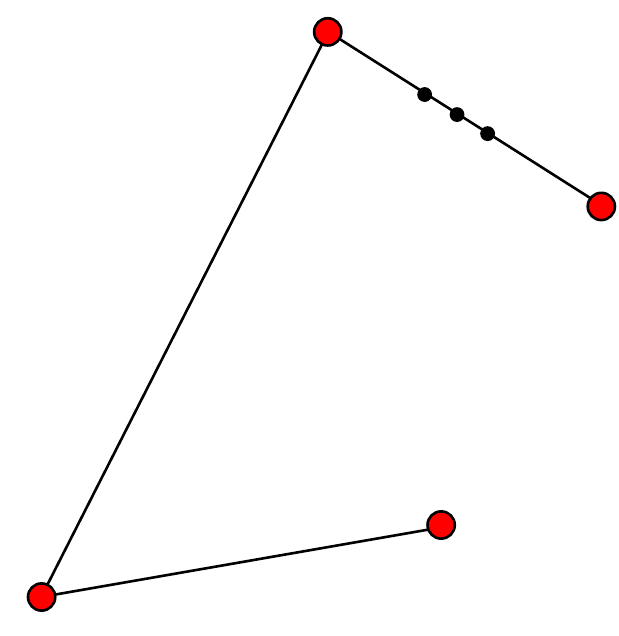}}$\quad
   $\underset{\textstyle\text{(b) $D_yR_n$ and $D_y^2R_n$}}{\labellist\small\hair 2.5pt       
    \pinlabel {$p_0$} by 0 0 at -9 2
        \pinlabel {$p_1$} by 0 0 at 125 10
         \pinlabel {$p_2$} by 0 0 at 185 120
          \pinlabel {$p_{n-1}$} by 0 0 at 90 190
           \pinlabel {$e_1$} by 0 0 at 50 50
           \pinlabel {$e_2$} by 0 0 at 20 100
            \pinlabel {$e_3$} by 0 0 at 127 50
            \pinlabel {$e_4$} by 0 0 at 145 96
        \endlabellist\includegraphics[width=.2\textwidth]{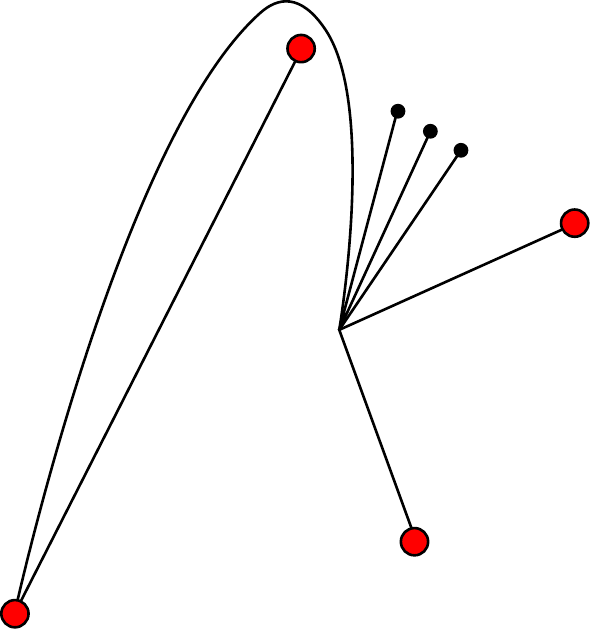}}$\quad
   $\underset{\textstyle\text{(c) $D_yD_xR_n$}}{\labellist\small\hair 2.5pt       
    \pinlabel {$p_0$} by 0 0 at -9 2
        \pinlabel {$p_1$} by 0 0 at 125 10
         \pinlabel {$p_2$} by 0 0 at 185 115
          \pinlabel {$p_{n-1}$} by 0 0 at 90 190
          \pinlabel {$e_1$} by 0 0 at 100 70
           \pinlabel {$e_2$} by 0 0 at 47 50
           \pinlabel {$e_3$} by 0 0 at 10 100
            \pinlabel {$e_{n-1}$} by 0 0 at 158 135
        \endlabellist\includegraphics[width=.2\textwidth]{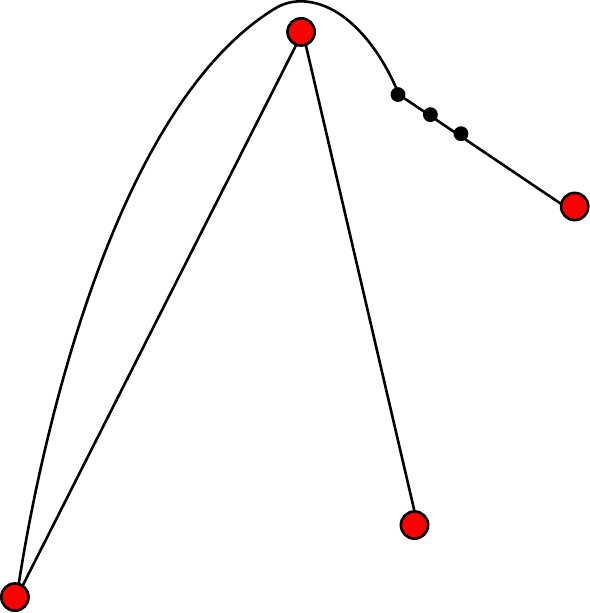}}$\quad
   $\underset{\textstyle\text{(d) $D_y^{-1}D_x^{-1}R_n$}}{\labellist\small\hair 2.5pt   
    \pinlabel {$p_0$} by 0 0 at -5 10
        \pinlabel {$p_1$} by 0 0 at 105 45
         \pinlabel {$p_2$} by 0 0 at 185 135
          \pinlabel {$p_{n-1}$} by 0 0 at 90 210
          \pinlabel {$e_1$} by 0 0 at 90 90
           \pinlabel {$e_2$} by 0 0 at 170 150
           \pinlabel {$e_{2n-3}$} by 0 0 at 58 36
            \pinlabel {$e_{2n-4}$} by 0 0 at 200 10
        \endlabellist\includegraphics[width=.2\textwidth]{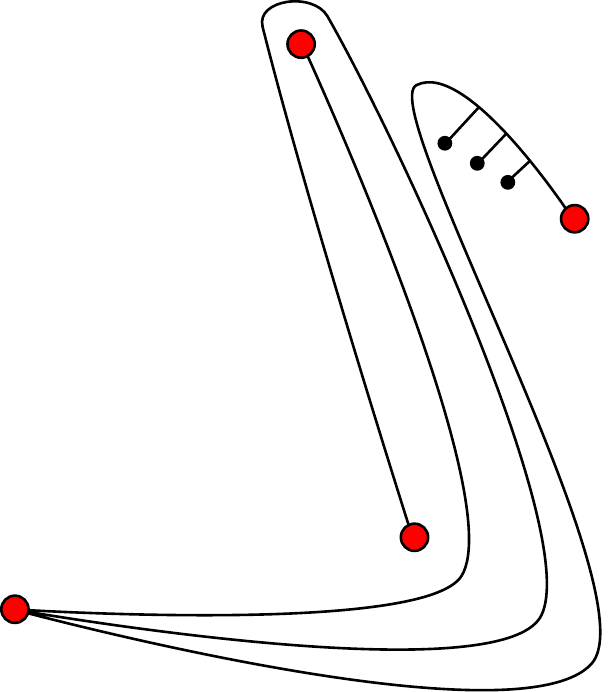}}$\quad
   $\underset{\textstyle\text{(e) $D_x^{-2}$}}{\labellist\small\hair 2.5pt   
    \pinlabel {$p_0$} by 0 0 at -9 2
        \pinlabel {$p_1$} by 0 0 at 105 45
         \pinlabel {$p_2$} by 0 0 at 185 135
          \pinlabel {$p_{n-1}$} by 0 0 at 60 175
          \pinlabel {$e_{2n-3}$} by 0 0 at 90 90
           \pinlabel {$e_2$} by 0 0 at 175 100
           \pinlabel {$e_{1}$} by 0 0 at 58 36
            \pinlabel {$e_{2n-4}$} by 0 0 at 160 0
        \endlabellist\includegraphics[width=.2\textwidth]{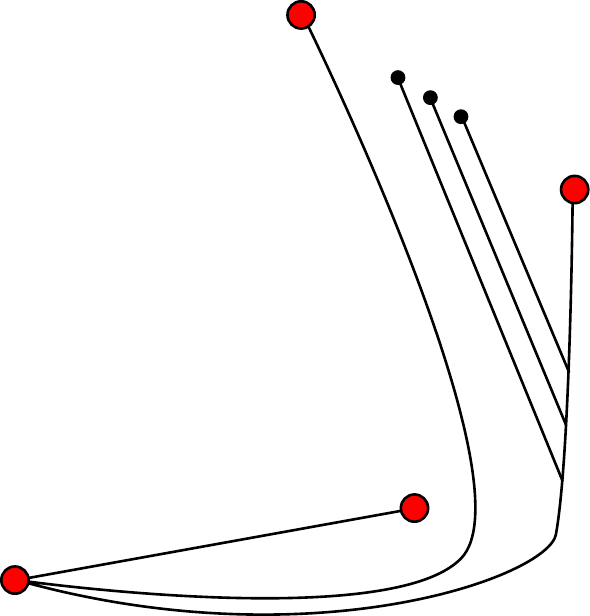}}$\qquad
   $\underset{\textstyle\text{(f) $D_x^{-1}R_n$}}{\labellist\small\hair 2.5pt       
    \pinlabel {$p_0$} by 0 0 at -9 2
        \pinlabel {$p_1$} by 0 0 at 105 45
         \pinlabel {$p_2$} by 0 0 at 185 125
         \pinlabel {$p_{n-1}$} by 0 0 at 60 178
          \pinlabel {$e_1$} by 0 0 at 95 90
           \pinlabel {$e_2$} by 0 0 at 60 25
           \pinlabel {$e_3$} by 0 0 at 160 80
           \pinlabel {$e_{n-1}$} by 0 0 at 160 145
        \endlabellist\includegraphics[width=.2\textwidth]{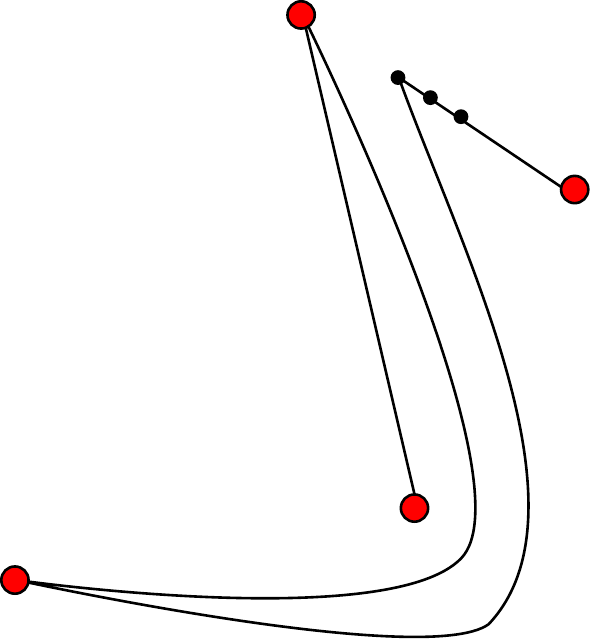}}$
    \caption{The topological Hubbard trees for the indicated maps.}
    \label{fig:invariant_trees}
\end{figure}

\begin{proof}[Proof of Theorem~\ref{thm:many}]

It follows from the reduction formulas that when $m \neq 0$ the map $D_x^m R_n$ is equivalent to one of the base cases, depending only on $\dig$, the first non-zero digit of the 9-adic expansion of $m$. 

It remains to determine the polynomials to which the base cases are equivalent. We do this by applying the Alexander method \cite[Proposition 3.1]{BLMW}.  For each base case, let $g$ be the twisted rabbit under consideration and let $f$ be the polynomial to which we claim it is equivalent. We check that $f$ and $g$ are equivalent by checking that the preimage of the Hubbard tree for $f$ is homeomorphic to the preimage of the topological Hubbard tree for $g$, that the invariant angle assignments of the trees are equal, and that the dynamical maps agree on the edges of the preimage trees.  Topological Hubbard trees for each of the base cases are depicted in Figure~\ref{fig:invariant_trees}.  The maps $D_xR_n$ and $D_x^2R_n$ have the same topological Hubbard tree, but the (counterclockwise) angle between $e_1$ and $e_2$ that is invariant under $D_xR_n$ is $2\pi/3$ and the angle that is invariant under $D_x^2R_n$ is $4\pi/3$. Thus $D_xR_n$ is equivalent to $A_n$ and $D_x^2R_n$ is equivalent to $\overline{A}_n$.  The maps $D_yR_n$ and $D_y^2R_n$ have the same topological Hubbard tree, but the (counterclockwise) angle between $e_1$ and $e_2$ that is invariant under $D_yR_n$ is $\frac{2\pi}{3}$ and the invariant angle under $D_y^2R_n$ is $\frac{4\pi}{3}$. Indeed, the bottom row of Figure~\ref{fig:tree_lift} demonstrates the process of lifting the topological Hubbard for $D_yR_n$ through the map $D_yR_n$.  Moreover, it also shows that the edges of the lift of the topological Hubbard tree that are homotopic (relative to endpoints) to $e_1$ and $e_2$ have angle measure $\frac{2\pi}{3}$.  Thus $D_yR_n$ is equivalent to $K_{n,1}$ and a similar calculation shows that $D_y^2R_n$ is equivalent to $K_{n,2}$.  The topological Hubbard tree for $D_yD_xR_n$ is a path of length $n-1$ with the same edge map as $B_n$, and indeed the angle between $e_2$ and $e_3$ (measured counterclockwise) is $\frac{2\pi}{3}$.  Therefore $D_yD_xR_n$ is equivalent to $B_n$.  On the other hand, the topological Hubbard tree for $D_x^{-1}R_n$ has the same edge map as $B_n$, but the (counterclockwise) angle between $e_2$ and $e_3$ is $\frac{4\pi}{3}$.  Therefore $D_x^{-1}R_n$ is equivalent to $\overline{B}_n$.  The topological Hubbard tree for $D_y^{-1}D_x^{-1}R_n$ is trivalent at the critical point and the edges $\{e_1,e_{2n-3},e_{2n-4}\}$ are cyclically permuted counterclockwise, which agrees with the edge map for $\overline{Y}_n$.  Similarly, the topological Hubbard tree for $D_x^{-2}R_n$ is also trivalent at the critical point.  However, $D_x^{-2}R_n$ permutes the edges  $\{e_1,e_{2n-3},e_{2n-4}\}$ cyclically permutes the edges clockwise, which agrees with the edge map for $Y_n$.
\end{proof}

As mentioned in Section 5 of \cite{BLMW},
the tree lifting algorithm is used to find the topological Hubbard tree for each base case. Once found, however, the earlier steps of the algorithm are not needed in order to verify that the output tree is invariant under lifting. We illustrate both the algorithm and the invariance check for the map $D_yR_n$ from case $9k+3$ in Figure~\ref{fig:tree_lift}. Using the Hubbard tree for $R_n$ as the input tree, the tree lifting algorithm requires only a single step in order to find the topological Hubbard tree for $D_yR_n$.

\p{More generalizations} Notice that because each of $c_3,c_4,\dots,c_{n-1}$ and $c_0=z$ lifts to $x$ under iteration of the lifting map for $R_n$, we have that $D_{c_i}^m \simeq D_x^m$ for all $i\neq 1$ and for all $m \in \Z$. Thus our solution to the twisted rabbit problems of the form $D_x^mR_n$ also immediately give solutions to all twisted rabbit problems of the form $D_{c_i}^mR_n$ with $i \neq 1$.

\begin{figure}[h]
    \centering
    $\vcenter{\hbox{\includegraphics[scale=.5]{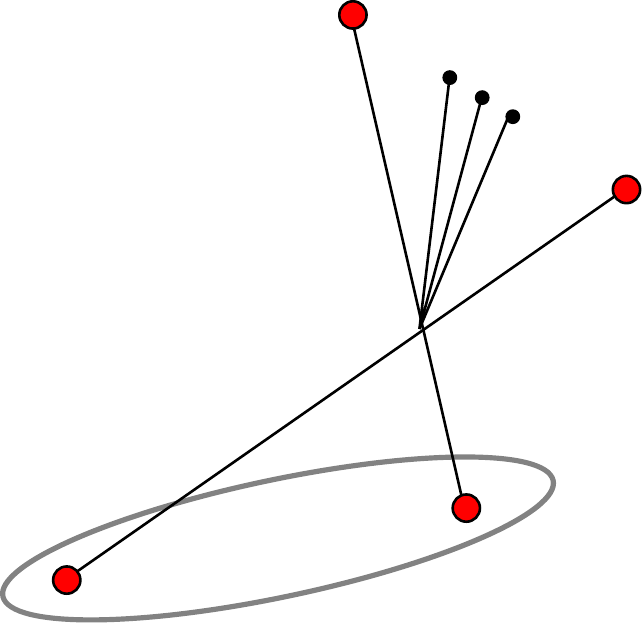}}}\quad \stackrel{D_y^{-1}}{\longrightarrow}\quad\vcenter{\hbox{\includegraphics[scale=.5]{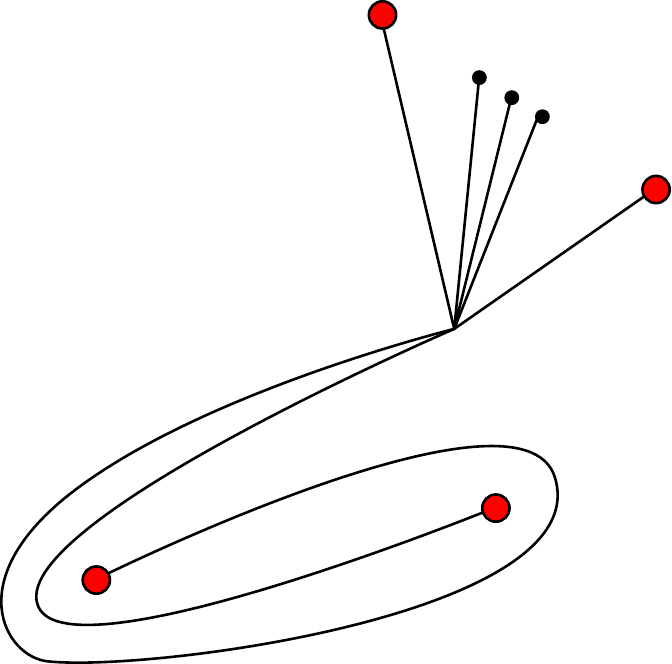}}}\quad\stackrel{R_n^{-1}}{\longrightarrow}\quad\vcenter{\hbox{\includegraphics[scale=.6]{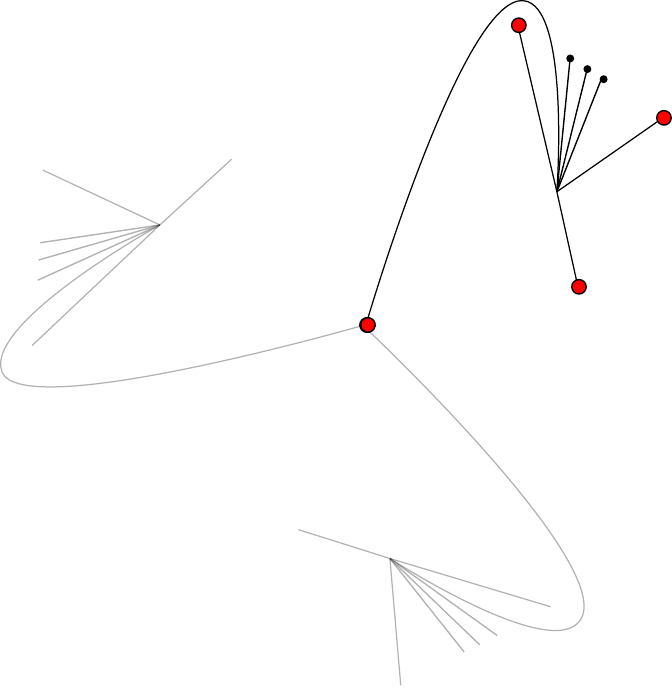}}}$
    
    $\vcenter{\hbox{\includegraphics[scale=.5]{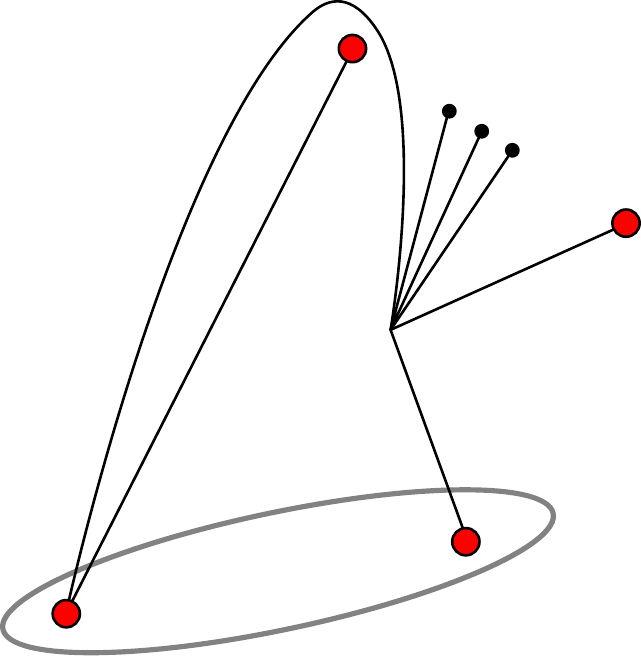}}}\quad \stackrel{D_y^{-1}}{\longrightarrow}\quad\vcenter{\hbox{\includegraphics[scale=.5]{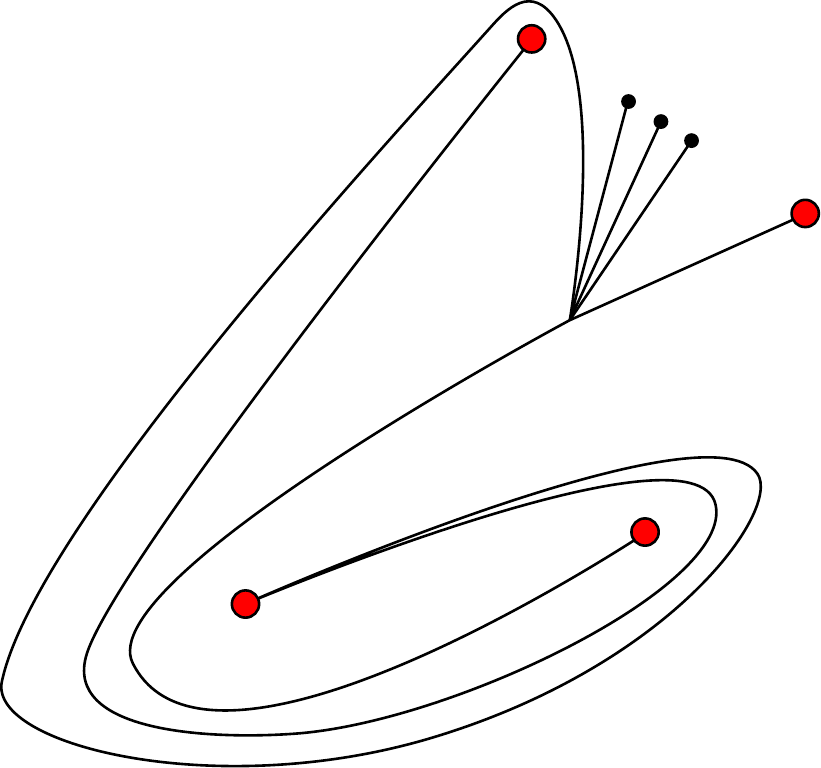}}}\quad\stackrel{R_n^{-1}}{\longrightarrow}\quad\vcenter{\hbox{\labellist\small\hair 2.5pt       
    \pinlabel {$p_0$} by 0 0 at 110 92
          \pinlabel {$e_1$} by 0 0 at 152 80
           \pinlabel {$e_2$} by 0 0 at 140 140
        \endlabellist\includegraphics[scale=.6]{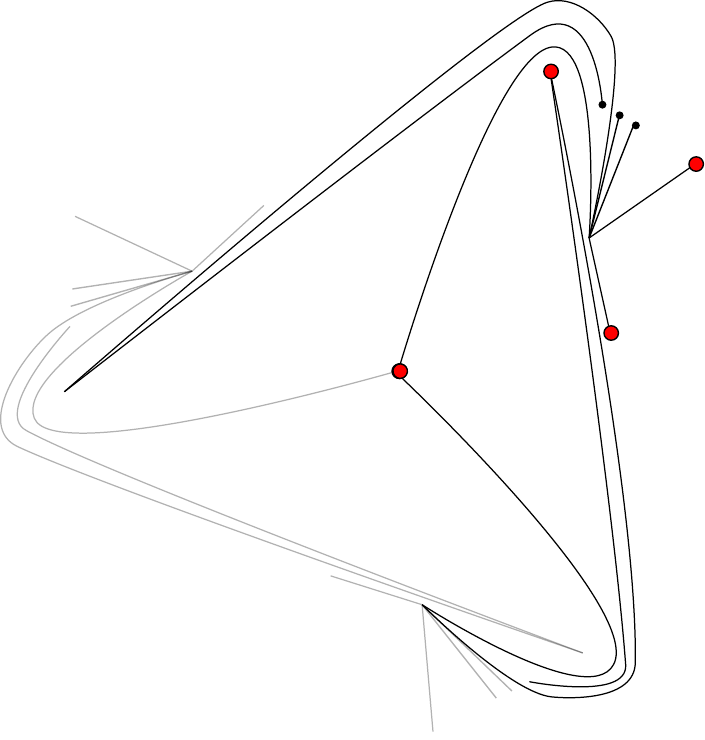}}}$
    \caption{The top row shows the lift of the Hubbard tree for $R_n$ under $D_yR_n$.  The second row shows that the resulting tree is invariant under $D_yR_n$.  In the figures on the right, the dark edges are those that are in the hull of the post-critical set.}
    \label{fig:tree_lift}
\end{figure}

\bibliographystyle{plain}
\bibliography{refs.bib}

\end{document}